\newlist{Aenumerate}{enumerate}{1}
\setlist[Aenumerate]{label=(S\arabic*)}
\definecolor{citeblue}{RGB}{0,0,139}
\newtheorem{theorem}{Theorem}[section]
\newtheorem{lemma}[theorem]{Lemma}
\newtheorem{prop}[theorem]{Proposition}
\theoremstyle{definition}
\newtheorem{defi}[theorem]{Definition}
\theoremstyle{remark}
\newtheorem{rmk}[theorem]{Remark}
\numberwithin{equation}{section}
\numberwithin{theorem}{section}
\newcommand{\abs}[1]{\left\vert#1\right\vert}
\newcommand{\inner}[1]{\left\langle#1\right\rangle}
\newcommand{\set}[1]{\left\{#1\right\}}
\newcounter{stepnum}[section]
\newcounter{claimnum}[subsection]
\newcommand{\claim}{%
	\par
	\refstepcounter{claimnum}%
	\noindent  
	\textbf{Claim \arabic{claimnum}}.\enspace\ignorespaces
}
\newcounter{casenum}[section]
\newcommand{\reduceoperator}[1]{%
  \@for\next:=#1\do{\expandafter\reduceoperator@\expandafter{\next}}%
}
\newcommand{\reduceoperator@}[1]{%
  \csletcs{normal@#1@}{#1@}%
  \csedef{#1@}{\noexpand\reduceoperator@@\csname normal@#1@\endcsname}%
}
\newcommand{\reduceoperator@@}[1]{%
  \mathop{\mathpalette\reduceoperator@@@{#1}}%
}
\newcommand{\reduceoperator@@@}[2]{%
  \ifx#1\displaystyle\textstyle\fi#2%
}
\newcommand{\p}[1]{\left(#1\right)}
\newcommand{\dbl}[1]{\widetilde{#1}}
\def\R{\mathbb R}
\def\C{\mathbb C}
\def\S{\mathbb S}
\def\@tocline#1#2#3#4#5#6#7{\relax
	\ifnum #1>\c@tocdepth % then omit
	\else
	\par \addpenalty\@secpenalty\addvspace{#2}%
	\begingroup \hyphenpenalty\@M
	\@ifempty{#4}{%
		\@tempdima\csname r@tocindent\number#1\endcsname\relax
	}{%
		\@tempdima#4\relax
	}%
	\parindent\z@ \leftskip#3\relax \advance\leftskip\@tempdima\relax
	\rightskip\@pnumwidth plus4em \parfillskip-\@pnumwidth
	#5\leavevmode\hskip-\@tempdima
	\ifcase #1
	\or\or \hskip 2em \vspace{0.5mm}\or \hskip 3em \else \hskip 3em \fi%
	#6\nobreak\relax
	\hfill\hbox to\@pnumwidth{\@tocpagenum{#7}}\par% <---- \dotfill -> \hfill
	\nobreak
	\endgroup
	\fi}
\begin{document}
	\title[Parallel Mean Curvature Surfaces]{Parallel mean curvature surfaces with constant contact angle along free boundaries}

	\author[Gao]{Rui Gao}
	\address{School of Mathematical Sciences, Shanghai Jiao Tong University\\ 800 Dongchuan Road \\ Shanghai, 200240 \\ P. R. China}%
	\email{gaorui0416@sjtu.edu.cn}
	
	\author[Zhu]{Miaomiao Zhu}
	\address{School of Mathematical Sciences, Shanghai Jiao Tong University\\ 800 Dongchuan Road \\ Shanghai, 200240 \\ P. R. China}
	\email{mizhu@sjtu.edu.cn}
	
\thanks{The first author would like to thank Professor Martin Man-chun Li, Professor Gaoming Wang and Professor Jintian Zhu for valuable discussions and suggestions. The second author would like to thank Professor Jonathan J. Zhu for useful comments.}
	 \subjclass[2020]{49J35; 53A10; 53C42; 58E20}
	 \keywords{Parallel Mean Curvature; Arbitrary Codimensions; Constant Contact Angle; Holomorphic Differential.}
	 \date{\today}
	 \dedicatory{}
	
	%\commby{}%
% ----------------------------------------------------------------
\begin{abstract}
    We classify branched immersed disks in space forms with non-zero parallel mean curvature vector and non-orthogonal constant contact angle along the boundary in 4-dimensional space form. For higher codimensional case, we prove a codimension reduction theorem for branched immersed bordered Riemann surfaces of higher genus with multiple boundary components under the same parallel mean curvature and constant contact angle assumptions. Furthermore, we construct a family of explicit examples of branched minimal immersions satisfying the non-orthonormal constant contact angle free boundary condition, which demonstrate the sharpness of both the classification result and the codimension reduction result.
\end{abstract}

\maketitle
%\tableofcontents
% ----------------------------------------------------------------
\vskip2cm

\section{Introduction}\label{intro}

\subsection{Backgrounds}   
\ 
\vskip5pt

The study of surfaces in 3-dimensional Riemannian manifolds is a cornerstone of differential geometry. Among the various geometrically distinguished families of surfaces, those of \textit{constant mean curvature} (CMC) are of fundamental importance. A natural generalization to higher codimensions is the class of surfaces whose mean curvature vector is parallel in the normal bundle; these are known as \textit{parallel mean curvature} (PMC) surfaces.

This paper is devoted to investigating the rigidity and classification of PMC surfaces branched immersed in a geodesic ball \( B^n \) of a real space form \( \mathbb{R}^n(c) \) for some $c \in \R$. We focus on the case where these surfaces have non-empty boundary lying on the sphere \( \partial B \), and where the branched immersion maintains a constant contact angle along this boundary (see Definition \ref{defi:contact angle} below for precise descriptions). 

The theory of closed surfaces with parallel mean curvature vector has been studied in the classical works of Schouten-Struik \cite{Schouten-Struik1938}, Coburn \cite{Coburn1939}, and Wong \cite{Wong1946} around the 1940s. A systematic investigation of their rigidity and classification theory in homogeneous manifolds began in the early 1970s. Seminal contributions from this period include Ferus's characterization of spherical immersions in 4-dimensional space forms \cite{Ferus1971}, and Yau's rigidity results for submanifolds in spaces of constant curvature, which encompassed a classification in 4-dimensional real space forms \cite{yau1974}. Independent and related work was also conducted by Hoffman \cite{Hoffman-thesis1972, hoffman1973jdg} and Chen \cite{chen-Chern72}. This line of research has been extended to various types of ambient geometries, see for instances \cite{Kenmotsu-Zhou2000, DeLira-Vitorio2010, Alencar-doCarmo-Tribuzy2010,Fetcu2012,Torralbo-Urbano2012,
Fetcu-Rosenberg2015} and references therein. 

While the classification of closed PMC surfaces in space forms $\mathbb{R}^n(c)$ has been extensively studied, the corresponding results for bordered PMC surfaces with free boundary constraints are less understood.  For branched immersion with parallel mean curvature disk into a ball $B^n$ in a space form $\R^n(c)$ that meets the boundary $\partial B^n$ orthogonally, Fraser-Schoen \cite{Fraser-Schoen2015} showed that such a surface must be contained in a 3-dimensional totally geodesic submanifold of $\R^n(c)$ and is totally umbilic. This extends the results of Nitsche \cite{Nitsche1985} and Souam \cite{Souam1997} in dimension 3 to higher codimensions. When the bordered surface meets $\partial B^n$ at a constant angle along the free boundary, in the codimensional one case ($n = 3$), Ros-Souam \cite{Ros-Souam1997} proved that such a surface must be totally umbilic if it has genus zero, see also Chodosh-Edelen-Li \cite{Chodosh-Edelen-Li} for rigidity of capillary minimal discs in a hemisphere $\mathbb{S}^3_+$. Very recently, Naff-Zhu \cite{naff2025freeboundarycapillaryminimal} showed that any immersed PMC disk is totally umbilic under some ``parallel projected conormal" condition along the free boundary $u(\partial D) \subset \partial B^n$.

The major work of this paper is the generalization of the rigidity results of \cite{Nitsche1985,Souam1997,Ros-Souam1997,Fraser-Schoen2015,naff2025freeboundarycapillaryminimal} to the setting of branched immersions from a disk into space forms with non-zero parallel mean curvature vector field and constant non-zero contact angle. Furthermore, we provide a codimension reduction result for surfaces of higher genus and  multiple boundary components.

%---------------------------------------------------------
\subsection{Main Results}\ 
\vspace{5pt}

We begin by analyzing the simpler case of codimension two. In this case, we establish the following complete classification for PMC surface with non-zero constant contact angle along the free boundary.

\begin{theorem}\label{main theorem 1}
Let $u : D \rightarrow B^4$ be a branched immersion with non-zero parallel mean curvature vector from the unit disk $D \subset \R^2$ into a geodesic ball $B^4$ of a $4$-dimensional space form $\R^4(c)$ of constant curvature $c \in \R$ such that $u(D)$ meets $\partial B^4$ at a constant non-zero contact angle $\theta$\footnote{See Definition \ref{defi:contact angle} and Figure \ref{fig:capillary_angle} for the precise description of the contact angle $\theta$ between $u(D)$ and $\partial B^4$ along the free boundary $u(\partial D) \subset \partial B^4$.}. 

Then $u(D)$ is a totally umbilic surface contained in a $3$--dimensional totally umbilic submanifold $P^3\subset \R^4(c)$.
More precisely:
\begin{enumerate}
\item If $c>0$, then $P^3$ is a round $\S^3(c_1)\subset \S^4(c)$ with sectional curvature $c_1\ge c$ and
$u(D)$ is a spherical cap in $P^3$.
\item If $c=0$, then $P^3$ is either an affine $\R^3\subset\R^4$  or a round $\S^3(c_2)\subset\R^4$ with $c_2>0$, and $u(D)$ is a plane disk or a spherical cap in $P^3$.
\item If \(c < 0\), then \(P^3 \subset \mathbb{H}^4(c)\) is either totally geodesic, equidistant, a horosphere, or a geodesic sphere; correspondingly, \(u(D)\) is a piece of a totally geodesic plane disk, an equidistant surface, a horosphere, or a round sphere in \(P^3\) respectively, with constant contact angle along \(u(\partial D) \subset P^3\cap \partial B^4 \subset P^3\) in all cases.
\end{enumerate}

Furthermore, if $u: A(r,R) \rightarrow B^4$ is a branched immersion with non-zero parallel mean curvature vector from the annulus $A(r,R) \subset \R^2$, $0<r < R$, into  $B^4$ meets $\partial B^4$ admitting a constant non-zero contact angle along each boundary component of $u(\partial A(r,R))$, then $u(A(r,R))$ has no umbilic point.
\end{theorem}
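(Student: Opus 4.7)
The plan is to attach to the branched conformal PMC immersion $u\colon\Sigma\to B^4\subset\R^4(c)$ a single holomorphic quadratic differential on $\Sigma$, to force its boundary values to be real via the constant contact angle condition, and then to read off consequences from the topology of $\Sigma$. Since $H\ne 0$ is parallel, I would set $e_3:=H/|H|$, a parallel unit section of the normal bundle; in codimension two its perpendicular mate $e_4$ is then automatically parallel as well, since $\langle\nabla^\perp e_4,e_3\rangle=-\langle e_4,\nabla^\perp e_3\rangle=0$. In isothermal coordinates $z=x+iy$ on the immersed locus, I would set $\phi_\alpha:=\langle\mathrm{II}(\partial_z,\partial_z),e_\alpha\rangle$ for $\alpha\in\{3,4\}$ and $\Phi:=(\phi_3+i\phi_4)\,dz^2$. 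The Codazzi equation in the space form $\R^4(c)$ together with the parallelism of both $e_3$ and $e_4$ should yield $\bar\partial\phi_3=\bar\partial\phi_4=0$, and a standard removable-singularity argument across the isolated branch points will promote $\Phi$ to a holomorphic quadratic differential on all of $\Sigma$.

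The crucial second step will be the boundary analysis. Let $T$ denote the unit tangent and $\nu$ the outward unit conormal to $\partial\Sigma$ in $(\Sigma,u^*g)$, let $N$ be the outward unit normal to $\partial B^4$ in $\R^4(c)$, and let $\xi$ be the unit vector in $T_{u(p)}\partial B^4$ orthogonal to $du(T)$ and transverse to $u(\partial\Sigma)$ in $\partial B^4$. The contact angle hypothesis will give $du(\nu)=\sin\theta\,N+\cos\theta\,\xi$, after which I would fix the orientation so that $e_3=\cos\theta\,N-\sin\theta\,\xi$, with $e_4$ tangent to $\partial B^4$ and perpendicular to $\mathrm{span}\{du(T),\xi\}$. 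Using the total umbilicity of $\partial B^4\subset\R^4(c)$ with its constant principal curvature $\kappa_c$, and differentiating the capillary identity tangentially along $\partial\Sigma$, the plan is to expand $\phi_3(T,T)$ and $\phi_4(T,T)$ in terms of the geodesic curvatures of $u(\partial\Sigma)\subset\partial B^4$ and of $\partial\Sigma\subset\Sigma$; collecting real and imaginary parts should produce the reality $\mathrm{Im}\,\Phi(T,T)\equiv 0$ on $\partial\Sigma$.

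Schwarz reflection will then extend $\Phi$ to a holomorphic quadratic differential on the Schottky double $\widehat\Sigma$. For $\Sigma=D$, $\widehat D\cong\mathbb{CP}^1$ carries no nonzero holomorphic quadratic differentials, which forces $\Phi\equiv 0$ on $D$. Then $\phi_4\equiv 0$ combined with $\langle H,e_4\rangle=0$ will force the $e_4$-shape operator to vanish, so the first normal bundle reduces to the parallel line $\mathrm{span}(e_3)$, while $\phi_3\equiv 0$ forces umbilicity with respect to $e_3$; hence $u(D)$ is totally umbilic, and Erbacher's codimension reduction theorem will place it in a totally geodesic $3$-dimensional submanifold $P^3\cong\R^3(c)$ of $\R^4(c)$. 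Inside $P^3$, $u(D)$ becomes a totally umbilic CMC surface with constant contact angle along $u(\partial D)\subset P^3\cap\partial B^4$, and the codimension-one classifications of Nitsche and Ros-Souam will then deliver the three listed families according to the sign of $c$; the broader totally umbilic choices of $P^3$ allowed in cases $(2)$ and $(3)$ come from the fact that any such surface in $\R^3(c)$ also sits inside many larger totally umbilic $3$-submanifolds of $\R^4(c)$. For $\Sigma=A(r,R)$, the double is a torus whose holomorphic quadratic differentials are spanned by $dz^2$, which is nowhere zero, so either $\Phi\equiv 0$ or $\Phi$ vanishes nowhere on $A$; the former would produce a totally umbilic CMC surface with \emph{two} boundary circles on $\partial B^4$, contradicting the codimension-one classification in which every capillary totally umbilic surface has a single boundary component. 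Thus $\Phi$ is nowhere vanishing on $A(r,R)$, i.e., $u$ has no umbilic points.

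The principal obstacle will be Step 2. In the orthogonal case $\theta=\pi/2$ of Fraser-Schoen, the frames $\{e_3,e_4\}$ and $\{N,\xi\}$ coincide up to sign and the boundary reality of $\Phi(T,T)$ is nearly immediate; for $\theta\notin\{0,\pi/2\}$ the frames are tilted by $\theta$ and the cancellation producing $\mathrm{Im}\,\Phi(T,T)\equiv 0$ should require a careful interplay between the parallelism of $H$, the umbilicity of $\partial B^4$ with its specific principal curvature $\kappa_c$, and the tangential derivative of the contact-angle identity. This is precisely where the non-orthogonal capillary hypothesis enters the argument.
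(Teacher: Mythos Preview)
Your Step~2 contains a genuine gap that the rest of the argument cannot absorb. You set $e_3:=H/|H|$ globally (this is what makes $\Phi$ holomorphic), and then along $\partial D$ you ``fix the orientation so that $e_3=\cos\theta\,N-\sin\theta\,\xi$.'' These are two different frames. At a boundary point the normal plane is indeed $\mathrm{span}\{\cos\theta\,N-\sin\theta\,\xi,\ \zeta\}$ with $\zeta\in T(\partial B^4)$ orthogonal to $du(T)$ and $\xi$, but $H/|H|$ can sit anywhere in that plane; nothing in the hypotheses forces $\langle H,\zeta\rangle=0$. If you rotate to the boundary-adapted pair $(\cos\theta\,N-\sin\theta\,\xi,\ \zeta)$ you lose parallelism and hence holomorphicity of $\Phi$; if you keep the parallel pair $(H/|H|,e_4)$ you have no control over how it meets $N$ and $\xi$, and your reality computation for $\mathrm{Im}\,\Phi(T,T)$ does not go through. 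The paper explicitly flags this: in the non-orthogonal regime the boundary real-valuedness that Fraser--Schoen enjoy (where $\nu=N$ forces $A(\tau,\nu)=0$ outright) is no longer available, and the examples in Section~\ref{Section 1.3.1} show one cannot expect a direct Hopf-differential boundary identity in codimension two.

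The paper's route around this is the point you are missing. It first runs a purely \emph{interior} argument: the Ricci equation in a space form gives $[A_H,A_{H'}]=0$, which in two dimensions forces $\Phi_{H'}/\Phi_H$ to be a real-valued meromorphic function, hence a constant; this produces a parallel unit normal $\eta$ with $A_\eta=\lambda I$, and $u(D)$ lands in a totally umbilic $P^3\subset\R^4(c)$ without ever touching the boundary condition. Only then, inside $P^3$ (now codimension one), does the constant contact angle with the totally umbilic slice $P^3\cap\partial B^4$ yield the curvature-line relation $A^{P}(\tau,\nu)=0$, which is exactly the boundary reality needed for the Hopf quartic $z^4\mathcal Q$. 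The inheritance of both the PMC property and a well-defined constant contact angle under this reduction is nontrivial and is isolated as Proposition~\ref{lem:PMC-and-angle-in-umbilical-S}. Your annulus argument has the same defect, since it rests on the same unproven boundary reality; once the reduction to $P^3$ is in place, the annulus conclusion follows as you say, from the one-dimensionality of holomorphic quadratics on the doubled torus.
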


For higher codimensional case $n -2 \geq 3$, we establish the following reduction of codimension result for PMC surface with higher genus and multiple boundary components.
\begin{theorem}\label{main theorem 2}
    Let \( u : \Sigma \rightarrow B^n \) be a branched immersion with non-zero parallel mean curvature vector from a bordered Riemann surface \( \Sigma \) into a geodesic ball \( B^n \) of an \( n \)-dimensional space form \( \mathbb{R}^n(c) \) of constant curvature \( c \in \mathbb{R} \), such that each component of \( u(\Sigma) \) meets \( \partial B^n \) at a constant non-zero angle $\theta_i$, $1 \leq i \leq k$, where $k$ is the number of boundary components of $\Sigma$. Then, $u(\Sigma)$ is one of the following surfaces:
    \begin{itemize}
        \item A minimal surface in a totally umbilic hypersurface $M^{n-1}$ of $\R^n(c)$ with constant contact angle $\theta_i^\prime \geq \theta_i$ along $u(\partial \Sigma) \subset \partial B^n \cap M_1$, $1 \leq i \leq k$;
        \item  A surface with constant mean curvature in a 3-dimensional totally umbilic submanifold $M_2$ of $\R^n(c)$ with constant contact angle $\dbl{\theta}_i \geq \theta_i$ along $u(\partial \Sigma) \subset \partial B^n \cap M_2$, $1 \leq i \leq k$. In particular, if $\Sigma$ has genus 1 and boundary components $k=1$, then $u(\Sigma)$ is totally umbilic and is characterized as in Theorem \ref{main theorem 1}; If $\Sigma$ has genus 1 and boundary components $k=2$, then $u(\Sigma)$ has no umbilic point.
    \end{itemize}
\end{theorem}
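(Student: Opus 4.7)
The plan is to reduce the codimension by locating parallel normal directions along which the second fundamental form $A$ vanishes, and then invoking an Erbacher--Moore type reduction to confine $u(\Sigma)$ inside a totally umbilic submanifold of smallest possible dimension. The main tool is the family of Hopf-type holomorphic quadratic differentials: for any parallel unit normal $\nu$ of $u$ in $\R^n(c)$, the Codazzi equation in a space form implies that $\phi_\nu := \langle A(\partial_z,\partial_z),\nu\rangle\,dz^2$ is a holomorphic quadratic differential on $\Sigma$, and the parallel mean curvature hypothesis automatically supplies the first such direction $\nu_1 = H/|H|$.

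Next I would translate the constant contact angle into boundary conditions on each $\phi_\nu$. Along a boundary component $\Gamma_i \subset \partial\Sigma$, writing the outward unit conormal as $\eta_i = \cos\theta_i\,\nu_{\partial B} + \sin\theta_i\,\mu_i$ with $\mu_i$ a unit normal of $u$ tangent to $\partial B^n$, the total umbilicity of $\partial B^n$ in $\R^n(c)$ together with $\theta_i$ being constant yields explicit formulas for $A(\eta_i, T_i)$ and $A(T_i, T_i)$ in terms of the second fundamental form of $\partial B^n$ and of $\langle \nu, \nu_{\partial B}\rangle$. This pins down the boundary phase of $\phi_\nu$ along $\Gamma_i$. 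Combining holomorphy with these phase conditions via a Pohozaev-type integration then forces $\phi_\nu \equiv 0$ for every parallel $\nu \perp H$, the requisite definite-sign boundary term being controlled by $\sin\theta_i \neq 0$.

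Once $\phi_\nu \equiv 0$ and $\mathrm{tr}\,A_\nu = 2\langle H, \nu\rangle = 0$, the shape operator $A_\nu$ itself vanishes identically, so an Erbacher--Moore reduction places $u(\Sigma)$ inside a totally umbilic submanifold $M$ of the smallest dimension carrying the non-umbilic normal directions of $A$. Two alternatives then emerge: if after all reductions the surviving first normal space is spanned by $H$ alone, i.e.\ $H$ is itself an umbilic direction for $A$, then $M = M^{n-1}$ is a totally umbilic hypersurface with $H \perp M^{n-1}$ and $u$ is minimal in $M^{n-1}$; otherwise the reduction terminates at a 3-dimensional totally umbilic $M_2$ and $u$ is CMC there. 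The inequality $\theta_i' \geq \theta_i$ (and $\widetilde\theta_i \geq \theta_i$) follows from a direct trigonometric computation relating the contact angle inside the smaller sphere $\partial B^n \cap M$ to the original contact angle in $\partial B^n$. The genus-one refinements for $k = 1, 2$ are obtained by applying Theorem \ref{main theorem 1} to the reduced surface inside the 4-dimensional umbilic submanifold produced after the first reduction step.

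The hard part is the boundary vanishing step: on a bordered Riemann surface of genus $g$ with $k$ boundary components, the space of holomorphic quadratic differentials with prescribed real boundary phases has real dimension $6g - 6 + 3k > 0$, so holomorphy together with the contact-angle boundary phase is not by itself enough to force $\phi_\nu \equiv 0$. The key extra input is a Pohozaev identity together with the strict positivity of $\sin\theta_i$, which produces a definite-sign boundary term in $\int_\Sigma |\phi_\nu|^2$ whose vanishing then yields $\phi_\nu \equiv 0$ and the desired umbilicity in direction $\nu$.
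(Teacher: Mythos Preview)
Your central step has a genuine gap. You assert that a Pohozaev-type identity forces $\phi_\nu\equiv 0$ for \emph{every} parallel unit normal $\nu\perp H$, but this conclusion is in general false, and no boundary integration can establish it. Take the Veronese cap of Section~\ref{section 1.3.3} and view it in $\R^5$ via $\S^4\hookrightarrow\R^5$: it is PMC with $H$ radial, has constant nonzero contact angle, and realises the first alternative of the theorem; yet its normal bundle inside $\S^4$ is \emph{not} flat, so there is no parallel $\nu\perp H$ at all and your Erbacher reduction never starts. More decisively, in the situation of the second alternative the paper produces a parallel unit normal $\xi_4\perp H$ with $A_{\xi_4}\not\equiv 0$ by construction---this $\xi_4$ is precisely the residual normal of the CMC surface inside $M_2^3$---so the theorem explicitly permits $\phi_{\xi_4}\not\equiv 0$. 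Your own remark that the space of holomorphic quadratic differentials with the relevant boundary phase has positive dimension $6g-6+3k$ is exactly why such vanishing cannot be forced; the unspecified Pohozaev identity with a ``definite-sign boundary term in $\int_\Sigma|\phi_\nu|^2$'' does not exist here.

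The paper's argument is structurally different and never uses the boundary data to kill a Hopf differential. The dichotomy is on $\Phi_H$ itself: if $\Phi_H\equiv 0$ then $A_{H/|H|}=|H|\,I$ is umbilic and one reduces directly to a totally umbilic hypersurface $M^{n-1}$ in which $u$ is minimal. If $\Phi_H\not\equiv 0$, then on the dense set where $A_H$ has distinct eigenvalues the Ricci identity $R^\perp(\cdot,\cdot;\xi,\eta)=\langle[A_\xi,A_\eta]\cdot,\cdot\rangle$ together with $[A_H,A_\eta]=0$ (from $\nabla^\perp H=0$) forces all shape operators to commute, hence $R^\perp\equiv 0$; commuting traceless symmetric $2\times 2$ operators are automatically proportional, which singles out a \emph{single} direction $\xi_4$ carrying all of $A|_{H^\perp}$ and gives $A_{\xi_\alpha}=0$ for $\alpha\ge 5$ by pure linear algebra, with no appeal to the boundary. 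An Erbacher-type integration in the quadric model of the space form, combined with a gluing across the loci $\{\dim N'_p=0\}$ and $\{\dim N'_p=1\}$, then confines $u(\Sigma)$ to a totally geodesic $P^4\cong\R^4(c)$, after which the codimension-two argument (Claim~\ref{claim 1} and Step~1 of Theorem~\ref{main theorem 1}) yields $M_2^3$. The constant contact angle hypothesis enters only through Proposition~\ref{lem:PMC-and-angle-in-umbilical-S}, which certifies that PMC and a constant nonzero angle (with $\sin\theta_M=\sin\theta/\sin\alpha\ge\sin\theta$) persist after each reduction.
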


%%%%%%%%%%%%%%%%%%%%%%%%%%%%%%%%%%%%%%%%%%%%%%
\subsection{Some Explicit Examples}\ 

\vskip5pt

In this subsection, we construct explicit examples of branched minimal immersions in space forms that meet the boundary of a geodesic ball at a \emph{constant non-orthogonal} contact angle. A key feature is that these immersions need not reduce in codimension, that is, their images are not contained in any proper totally umbilic submanifold of the ambient space. Moreover, as the contact angle $\theta \nearrow \frac{\pi}{2}$, these examples exhibit an asymptotic codimension reduction, that is, the image becomes asymptotically contained in a lower-dimensional totally umbilic submanifold. This is a new phenomenon in the non-orthogonal contact angle setting. When the contact angle is orthogonal, any branched minimal disk in a geodesic ball of $\R^n(c)$ is totally umbilic and contained in some $3$-dimensional totally geodesic submanifold of $\R^n(c)$, as established in \cite{Fraser-Schoen2015}. Notably, the examples in Section~\ref{Section 1.3.1} and Section~\ref{section 1.3.3} below indicate that the assumptions in Theorem~\ref{main theorem 1} cannot be weakened in general, and they realize the first alternative in Theorem~\ref{main theorem 2}.

\subsubsection{An example in  \texorpdfstring{$\R^4$}{R4}}\label{Section 1.3.1}\ 

Identify $\R^4\simeq\C^2$ and let $B^4=\{w\in\C^2:\ |w|<1\}$ with $\partial B^4=S^3$ and outward unit normal $\boldsymbol{n}(w)=w$ on $S^3$.
Fix $a,b>0$ with $a^2+b^2=1$ and define
\[
u:D\to B^4,\qquad u(z)=(a z,\; b z^2),\quad z\in D=\{|z|<1\}.
\]
Then, $u(D)\subset B^4$ and $u(\partial D)\subset \partial B^4$. Moreover, $\partial_z u=(a,2bz)\neq 0$, so $u$ is an immersion. Since $u$ is holomorphic as a $\C^2$-valued function, it is harmonic and conformal, hence $u(D)\subset\R^4$ is a minimal immersion.
Along $\partial D$ we have $\boldsymbol{n}=u$, and the outward unit conormal of $u$ is $\boldsymbol{\nu}=\partial_r u/|\partial_r u|$. Thus, the contact angle is  given by
\[
\sin\theta=\langle \boldsymbol{\nu},\boldsymbol{n}\rangle_{\R^4}
=\frac{\langle u,\partial_r u\rangle_{\R^4}}{|\partial_r u|}
=\frac{a^2+2b^2}{\sqrt{a^2+4b^2}}\in(0,1),
\]
hence, $\theta\in(0,\frac{\pi}{2})$ is non-orthogonal. Furthermore, $u(D)$ cannot be contained in any totally umbilic hypersurface of $\R^4$\footnote{Indeed, the hyperplane case is excluded by restricting $\langle u,\xi\rangle_{\R^4}$ to $\partial D$ and separating Fourier modes $e^{it}$ and $e^{i2t}$, while the sphere case is excluded since $\Delta |u-c|^2=2(|u_x|^2+|u_y|^2)>0$ for a minimal immersion $u$, so $|u-c|^2$ cannot be constant for any $c \in \R^4$.}. 
\begin{rmk}
    Observe that as \( b \searrow 0 \), we have \( \theta \nearrow \pi/2 \), and the image of \( u \) constructed above tends to a flat disk in $\R^4$. This demonstrates that the assumption of a non-vanishing mean curvature vector in Theorem \ref{main theorem 1} and Theorem \ref{main theorem 2} for a non-orthogonal constant contact angle case is essential and optimal. 
\end{rmk}
\subsubsection{A generalization to \texorpdfstring{$\R^{2m}$}{Lg}}\ 

Furthermore, the above example can be extended into the higher codimensional setting. Similarly, identify $\R^{2m}\simeq\C^m$ and let
$B^{2m}=\{w\in\C^m:\ |w|<1\}$, $\partial B^{2m}=S^{2m-1}$. Fix integers $1=k_1<k_2<\cdots<k_m$ and coefficients $c_1,\dots,c_m\in\C$ with $\sum_{j=1}^m |c_j|^2=1$ and $c_1\neq 0$, and set
\[
u:D\to B^{2m},\qquad u(z)=\big(c_1 z^{k_1},\,c_2 z^{k_2},\,\dots,\,c_m z^{k_m}\big).
\]
Then, $u(D)\subset B^{2m}$, $u(\partial D)\subset \partial B^{2m}$, and $u$ is a holomorphic immersion and therefore minimal in $\R^{2m}$.
Along $\partial D$, the unit conormal of $u$ is $\boldsymbol{\nu}=\partial_r u/|\partial_r u|$ and the contact angle is constant and non-orthogonal with
\[
 \sin\theta=\langle \boldsymbol{\nu},\boldsymbol{n}\rangle_{\R^{2m}}
=\frac{\sum_{j=1}^m k_j |c_j|^2}{\sqrt{\sum_{j=1}^m k_j^2 |c_j|^2}} \in (0,1).
\]

\subsubsection{Veronese Minimal Immersion with Non-orthonormal Contact Angle in \texorpdfstring{$\S^4$}{S4}}\label{section 1.3.3}\

Let $\S^4 \subset \R^5$ be the unit round sphere.  Recall that the Veronese immersion $u:\S^2\to \S^4$ is defined by
\[
u(x_1,x_2,x_3)=\left(\frac{\sqrt3}{2}(x_1^2-x_2^2),\ \sqrt3\,x_1x_2,\ \sqrt3\,x_1x_3,\ \sqrt3\,x_2x_3,\ \frac12(3x_3^2-1)\right),\quad x\in\S^2\subset\R^3.
\]
It is straightforward to verify that $u:\S^2 \rightarrow \S^4$ is conformal and harmonic, hence, its image is  a minimal surface in $\S^4$.

Using the spherical coordinates of $\S^2$: $x_1=\sin\varphi\cos\vartheta$, $x_2=\sin\varphi\sin\vartheta$ and $x_3=\cos\varphi$ with $(\vartheta,\varphi)\in [0,2\pi) \times [0,\pi ]$, fix any latitude $\varphi_0\in(0,\frac{\pi}{2})$ and set $t_0:=u_5(\varphi_0)$ and $\rho:=\arccos(t_0)\in(0,\pi)$.  Let $B_\rho:=\{y\in \S^4:\langle y,e_5\rangle\ge \cos\rho\}=\{y_5\ge t_0\}$ be the geodesic ball centered at $e_5$, and let $D_{\varphi_0}:=\{(\varphi,\theta):0\le \varphi\le \varphi_0\}\subset\S^2$ be the spherical cap, conformally identifying it with the unit disk. Then, we have $u(D_{\varphi_0})\subset \overline{B_\rho}$ and $u(\partial D_{\varphi_0})\subset \partial B_\rho=\{y_5=t_0\}$.  Denote by $\boldsymbol{n}$ the outward unit normal of $\partial B_\rho\subset \S^4$ and by $\boldsymbol{\nu}$ the outward unit conormal of $u(D_{\varphi_0})$ along $u(\partial D_{\varphi_0})$.  Along $\partial B_\rho$ one has
\[
\boldsymbol{n}(y)=-\frac{e_5-y_5y}{\sqrt{1-y_5^2}},\qquad y\in\partial B_\rho,
\]
 and along $\partial D_{\varphi_0}$ one has $\boldsymbol{\nu}=u_\varphi/{|u_\varphi|}={u_\varphi}/{\sqrt3}$.
Hence, the contact angle between $u( D_{\varphi_0})$ and $\partial B_\rho$ along
$\partial D_{\varphi_0}$ is given by 
\[
\sin\theta=\langle \boldsymbol{n},\boldsymbol{\nu}\rangle=\frac{\langle \boldsymbol{n},u_\varphi\rangle}{|u_\varphi|}
=\frac{2\cos\varphi_0}{\sqrt{1+3\cos^2\varphi_0}} \in (0,1),
\]
This implies, $\theta\in(0,\frac{\pi}{2})$ is constant and non-orthogonal. Besides, if we let $\theta \nearrow \pi/2$, then the image of $u(D_{\varphi_0})$ tends to a point in $\S^4$, leading to the trivial case.  Finally, $u(D_{\varphi_0})$ is not contained in any proper totally umbilic hypersurface of $\S^4$. Indeed, since totally umbilic hypersurfaces are subspheres $\S^4\cap\{ \langle v,\cdot\rangle=t\}$ for some $v\in \R^5$ and $t \in \R$, if $\langle v,u\rangle\equiv t$ on the open set $\mathring{D_{\varphi_0}}$, analyticity of $u$ forces $\langle v,u\rangle\equiv t$ on $\S^2$. However, it is well known that the  Veronese minimal immersion from $\S^2$ into $\S^4$ is linearly full which leads to a contradiction. Therefore, $u(D_{\varphi_0})$ cannot be contained in any lower-dimensional totally umbilic subsphere.
%%%%%%%%%%%%%%%%%%%%%%%%%%%%%%%%%%%%%%%%%%
\subsection{Difficulties and Main Ideas}\ 

\vskip5pt
%%%%%%%%%%%%%%%%%%%%%%%%%%%%%%%%%%%%%%%%%
We plan to draw on the complex analytic strategy developed for closed surfaces in \cite{yau1974,hoffman1973jdg,chen-Chern72} and for orthogonal free boundary setting in \cite{Fraser-Schoen2015}, rooted in the classical Hopf holomorphic quadratic differential. The guiding principle is to construct an appropriate holomorphic differential form on the surface and then exploit two analytic inputs---holomorphicity in the interior and real-valuedness along the boundary---to force the differential to vanish identically by Schwarz reflection principle. This vanishing yields strong geometric rigidity, typically collapsing the traceless part of the second fundamental form in some direction and hence implying the totally umbilicity and the codimension reduction property of the surface. In the non-orthogonal constant angle setting, the boundary real-valuedness is no longer fulfilled in general as indicated by the examples in Section~\ref{section 1.3.3}, unlike in \cite{Nitsche1985,Souam1997,Ros-Souam1997,Fraser-Schoen2015,naff2025freeboundarycapillaryminimal}. Our main idea is therefore to first prove a codimension reduction result, showing that the surface must lie in an appropriate totally umbilic submanifold. Within this reduced ambient manifold, we can then construct a suitable Hopf-type holomorphic quadratic differential whose boundary behavior is sufficiently rigid, and this ultimately yields the desired classification result. A key starting point of this approach is Proposition~\ref{lem:PMC-and-angle-in-umbilical-S}, which provides the inheritance mechanism needed for codimension reduction. More precisely, once the surface is shown to lie in a totally umbilic hypersurface $M$, Proposition~\ref{lem:PMC-and-angle-in-umbilical-S} guarantees that the PMC condition descends to the lower dimensional ambient space $M$ and shows the transversal intersection of $M$ with $\partial B^n$, so that the non-zero constant contact angle condition is well defined and also descends with an explicit contact angle conversion $\theta\mapsto\theta_M$\footnote{Here, $\theta_M \geq \theta$ is the constant contact angle of $u(\Sigma)$ with $M\cap \partial B^n$ in $M$.}.

The key idea to prove Theorem~\ref{main theorem 1} is to first use codimension--two holomorphic differentials to reduce the surface into a totally umbilic $3$-submanifold $P^3\subset \R^4(c)$. Within $P^3$ the problem becomes a CMC capillary disk in a $3$-space form by Proposition~\ref{lem:PMC-and-angle-in-umbilical-S}. We then show that the constant capillary angle with a totally umbilic hypersurface induces a boundary curvature-line condition as in \cite{Ros-Souam1997}, which precisely restores the missing boundary real-valuedness for the Hopf type differential. Schwartz reflection then forces the Hopf type differential to vanish, completing the classification in Theorem~\ref{main theorem 1}.

Building on this $4$-dimensional rigidity, Theorem~\ref{main theorem 2} treats arbitrary $\R^n(c)$ by combining normal-bundle flatness in the second alternative of Theorem \ref{main theorem 2} and the construction of a parallel rank-4 subbundle via a global gluing argument to reducing the codimension so as to place the surface in a totally geodesic $\R^4(c)\subset\R^n(c)$. Then, the same $P^3$-reduction as in Theorem \ref{main theorem 1} completes the proof of Theorem \ref{main theorem 2}. At each reduction step, Proposition~\ref{lem:PMC-and-angle-in-umbilical-S} ensures that both the parallel mean curvature condition and the non-zero constant contact angle condition on every boundary component persist, allowing the complex-analytic rigidity argument to apply for a surface with higher genus and multiple boundary components lying in reduced ambient submanifold.

\subsection{Organization of the paper}\ 
\vspace{5pt}

This paper is organized as follows. In Section~\ref{sec:preliminaries}, we introduce the geometric setting, fix notation, and construct quadratic differentials associated with surfaces of parallel mean curvature under constant contact angle boundary conditions. Section~\ref{sec:dimension-reduction} establishes the key dimension reduction result, Proposition~\ref{lem:PMC-and-angle-in-umbilical-S}, by showing that the PMC condition and constant contact angle boundary condition descend to a totally umbilic hypersurface $M$. Finally, in Section~\ref{sec:proof-main}, we use this reduction tool to analyze the holomorphic quadratic differentials obtained in Section~\ref{sec:preliminaries} for a PMC surface with constant non-zero contact angle to complete the proofs of Theorem~\ref{main theorem 1} and Theorem~\ref{main theorem 2}.

%%%%%%%%%%%%%%%%%%%%%%%%%%%%%%%%%%%%%%%%%%%%%%%%%%
\vskip1cm
\section{Preliminaries and Construction of Quadratic Differentials}\label{sec:preliminaries}

In this Section, particularly in Section \ref{sec:preliminaries 1}, we introduce the geometric setting, fix notation, and recall basic facts on surfaces with parallel mean curvature in space forms, together with the constant contact angle conditions. We then, in Section \ref{sec:preliminaries 2}, we construct the quadratic differentials associated with the trace-free parts of the second fundamental form in certain parallel normal directions.

\subsection{Settings and Basic Properties}\label{sec:preliminaries 1}\
\vspace{5pt}

%%%%%%%%%%%%%%%%%%%%%%%%%%%%%%%%%%%%%%%%%%%%%%%%%%%%%%%%%%
Let $(\Sigma,g)$ be an oriented Riemann surface with smooth boundary $\partial \Sigma \neq 0$ and $N$ be a Riemannian $n$-manifold with metric $h = \inner{\cdot,\cdot}$ and Levi-Civita connection $\overline{\nabla}$. Suppose $\mathcal{K} \subset N$ is a closed hypersurface of $N$ with outward unit normal vector $\boldsymbol{n}$, serving as a supporting submanifold of the free boundary problem constraint.
Denote $z = x^1 + \sqrt{-1} x^2$ to be the complex coordinate of the $\Sigma$ with
\begin{equation*}
    \frac{\partial}{\partial z} = \frac{1}{2}\p{\frac{\partial}{\partial x^1} - \sqrt{-1} \frac{\partial }{\partial x^2}} \quad \text{and}\quad \frac{\partial}{\partial \overline{z}} = \frac{1}{2}\p{\frac{\partial}{\partial x^1} + \sqrt{-1} \frac{\partial }{\partial x^2}}.
\end{equation*}

Suppose $u : \Sigma \rightarrow N$ is a branched immersion (i.e., weakly conformal and immersed except finitely many branch points) in $N$ with free boundary constraint $u(\partial \Sigma) \subset \mathcal{K}$. Under the above complex coordinate, we have
\begin{equation}
    \label{eq:complex eq 1}
    \inner{\frac{\partial u}{\partial z}, \frac{\partial u}{\partial \overline{z}}} = \frac{1}{2} \lambda^2 \quad \text{and}\quad \inner{\frac{\partial u}{\partial z}, \frac{\partial u}{\partial {z}}} =0,
\end{equation}
where $\lambda^2$ is the conformal factor determined by
\begin{equation}\label{eq:complex metric 1}
    u^*h = \lambda^2 g,
\end{equation}
for some smooth function $\lambda \geq 0$ vanishing at branch points. 
At each non-branch point $p \in \Sigma$, $T_{u(p)}N$ admits an orthogonal decomposition $T_{u(p)}N = du(T_{p} \Sigma)\oplus du(T_{p} \Sigma)^\perp$. Around each branch point $p_0 \in \Sigma$, the tangent space $du(T_{p} \Sigma)$ and normal space $du(T_{p} \Sigma)^\perp$ extend across $p_0$ smoothly to obtain the rank-two and rank-$(n-2)$ vector bundles over the whole $\Sigma$, also denoted by $du(T \Sigma)$ and $du(T \Sigma)^\perp$, respectively.  According to this extended decomposition, we denote $X^\top \in du(T_p \Sigma)$ and $X^\perp \in du(T_p \Sigma)^\perp$ as the components of vector $X \in T_{u(p)}N$, and denote $\nabla$ to be the connection on $du(T \Sigma)$ induced from $\overline{\nabla}$, which is the Levi-Civita connection of the induced metric $u^*h$ on $\Sigma$.

Given a normal vector field ${\boldsymbol{\xi}} \in \Gamma(du(T \Sigma)^\perp)$, the shape operator $A_{\boldsymbol{\xi}}$ of $u(\Sigma)$ associated to ${\boldsymbol{\xi}}$ is a self-adjoint linear transformation $A_{\boldsymbol{\xi}}:du(T \Sigma) \rightarrow du(T \Sigma)$, defined by 
$$A_{\boldsymbol{\xi}} (X) = - \p{\overline{\nabla}_X {\boldsymbol{\xi}}}^\top.$$
And the second fundamental form $A:du(T \Sigma)\times du(T \Sigma) \rightarrow du(T_{p} \Sigma)^\perp$ of $u(\Sigma)$ in $N$ satisfies $\inner{A(X,Y),{\boldsymbol{\xi}}} = \inner{A_{\boldsymbol{\xi}}(X), Y}$ for all $X,\, Y \in \Gamma(du(T \Sigma))$. Choosing an orthonormal basis $\set{e_1, e_2}$ of $du(T\Sigma)$, the mean curvature vector $H = H(u)$ of $u$ is defined as $H = 1/2 ( A(e_1,e_1) + A(e_2,e_2))$.  The normal connection $\nabla^\perp : du(T \Sigma) \times du(T \Sigma)^\perp \rightarrow du(T \Sigma)^\perp$ is defined as $\nabla^\perp_X {\boldsymbol{\xi}} : = \p{\overline{\nabla}_X {\boldsymbol{\xi}}}^\perp$ for all $X \in \Gamma(du(T \Sigma))$ and ${\boldsymbol{\xi}} \in \Gamma(du(T_ \Sigma)^\perp)$.

Equipped with these notations, we can give the precise definition of a branched immersed surface with a non-zero parallel mean curvature vector and a constant contact angle along the free boundary (see also {\cite[Remark 2.2]{naff2025freeboundarycapillaryminimal}}).

\begin{defi}[PMC branched immersion with constant contact angle]\label{defi:contact angle}
 A  branched immersion $u:\Sigma\to N$ with free boundary constraint $u(\partial\Sigma)\subset \mathcal{K}$ is said to have
\emph{parallel mean curvature} (PMC) if its mean curvature vector field
\[
H \in \Gamma\big(du(T\Sigma)^\perp\big)
\]
is parallel with respect to the normal connection, i.e.
\[
\nabla^\perp H \equiv 0
\quad\text{on } \Sigma\setminus \mathcal B,
\]
where $\mathcal B$ denotes the discrete branch set.

Along $\partial\Sigma\setminus \mathcal B$, let $\boldsymbol{\tau}$ be the unit tangent vector field of the boundary curve
$u(\partial\Sigma)$ compatible with its orientation, and let $\boldsymbol{\nu}$ be the outward unit conormal of
$u(\partial\Sigma)\subset u(\Sigma)$, so that $\boldsymbol{\tau},\boldsymbol{\nu}\in \Gamma\big(du(T\Sigma)|_{u(\partial\Sigma)}\big)$
and $\langle \boldsymbol{\tau},\boldsymbol{\nu}\rangle_{TN}=0$.
For each non-branch point $p\in \partial\Sigma$, define
\[
\cos\theta(p)\;:=\;\max\Big\{\,\langle \boldsymbol{x},\boldsymbol{\nu}\rangle_{T_{u(p)}N}\;:\;
\boldsymbol{x}\in T_{u(p)}\mathcal{K}\cap \mathbf{Span}\{\boldsymbol{\tau}\}^\perp,\ \ |\boldsymbol{x}|_{T_{u(p)}N}=1\,\Big\}
\in[0,1],
\]
and let $\theta(p)\in[0,\pi/2]$ be determined by $\cos\theta(p)$.
We call $\theta(p)$ the \emph{contact angle} of $u(\Sigma)$ with $M$ at $p$. 
If $\theta$ is constant along $\partial\Sigma$ after extending $\theta$ smoothly to boundary branch points, we say that $u$ has \emph{constant contact angle} along $\partial\Sigma$.

Moreover, in the non-orthogonal case $\cos\theta(p)>0$, equivalently $\operatorname{proj}_{T_{u(p)}\mathcal{K}}\boldsymbol{\nu}\neq 0$, the maximizer is unique and is given by
\[
\boldsymbol{\mu}(p)
=\frac{\operatorname{proj}_{T_{u(p)}\mathcal{K}}\boldsymbol{\nu}}{\bigl|\operatorname{proj}_{T_{u(p)}\mathcal{K}}\boldsymbol{\nu}\bigr|}
\in T_{u(p)}\mathcal{K}\cap \mathbf{Span}\{\boldsymbol{\tau}\}^\perp,
\qquad
\cos\theta(p)=\langle \boldsymbol{\mu}(p),\boldsymbol{\nu}(p)\rangle_{T_{u(p)}N}.
\]
In the orthogonal case $\cos\theta(p)=0$, i.e.\ $\boldsymbol{\nu}\perp T_{u(p)}\mathcal{K}$, the contact angle $\theta(p)=\pi/2$ is still well-defined, while the maximizer is not unique.
\end{defi}
\begin{figure}[h!]
    \centering
    \vspace{0cm} 
    \includegraphics[width=0.4\textwidth]{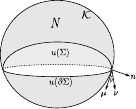} \hspace{0.5cm}
    \includegraphics[width=0.47\textwidth]{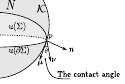}
    \vspace{0cm} 
    \caption{The contact angle between $u(\Sigma)$ and $\mathcal{K}$ along the boundary $u(\partial \Sigma)$}
    \label{fig:capillary_angle}
\end{figure}
\begin{rmk}\label{rmk 1}
    Note that by the choice of $\boldsymbol{\mu}$, we have 
    \begin{equation}
        \boldsymbol{\nu} \perp \boldsymbol{x} \quad \text{for all } \boldsymbol{x} \in T_{\boldsymbol{\mu}} := \set{\boldsymbol{x}\in T\mathcal{K}: \inner{\boldsymbol{x},\boldsymbol{\mu}} = 0}.
    \end{equation}
    Since $\dim(T_{\boldsymbol{\mu}}) = n-2$, it implies that $\boldsymbol{\mu}$, $\boldsymbol{\nu}$ and $\boldsymbol{n}$ are linearly dependent, that is, they are lie on a two dimensional plane such that 
    \begin{equation}\label{eq: decom 1}
        \boldsymbol{\nu} = \cos\theta\, \boldsymbol{\mu} + \sin\theta \,\boldsymbol{n}.
    \end{equation}
    In particular, when $\theta = \pi/2$, we have $\boldsymbol{\nu} = \boldsymbol{n}$, which corresponds to the case where $u(\Sigma)$ meets $\mathcal{K}$ orthogonally along $u(\partial \Sigma)$.
\end{rmk}
Recall that the normal curvature tensor $R^\perp$ of the normal connection $\nabla^\perp$ on the normal bundle $du(T_{p} \Sigma)^\perp$ is defined by
\begin{equation}\label{eq:defi of normal curvature}
    R^\perp(X,Y){\boldsymbol{\xi}} : = \nabla^\perp_X \nabla^\perp _Y {\boldsymbol{\xi}} - \nabla^\perp_Y \nabla^\perp_X {\boldsymbol{\xi}} - \nabla^\perp_{[X,Y]} {\boldsymbol{\xi}}
\end{equation}
and
\begin{equation}
    R^\perp(X,Y;{\boldsymbol{\xi}},{\boldsymbol{\eta}}) : = \inner{R^\perp(X,Y){\boldsymbol{\xi}},{\boldsymbol{\eta}}}
\end{equation}
for $X,\, Y \in \Gamma(du(T \Sigma))$ and ${\boldsymbol{\xi}},\, {\boldsymbol{\eta}} \in \Gamma(du(T \Sigma)^\perp)$. In particular, if the ambient manifold $N$ has constant sectional curvature $c \in \R$, then, for $X,\, Y,\, Z \in \Gamma(du(T \Sigma))$ and ${\boldsymbol{\xi}},\, {\boldsymbol{\eta}} \in \Gamma(du(T_{p} \Sigma)^\perp)$, the Ricci equation and the Codazzi equation for $u : \Sigma \rightarrow N$ can be written as 
\begin{equation}
    \label{eq:Ricci}
    R^\perp(X,Y;{\boldsymbol{\xi}},{\boldsymbol{\eta}}) = \inner{[A_{\boldsymbol{\xi}},A_{\boldsymbol{\eta}}]X,Y}
\end{equation}
and 
\begin{equation}
    \label{eq:Codazzi}
    (\overline{\nabla}_X A)(Y,Z) -  (\overline{\nabla}_Y A)(X,Z) = 0,
\end{equation}
respectively. Some basic properties for the branched immersion $u$ are summarized in the following Lemma.
\begin{lemma}\label{lem:properties H}
    Let $u : \Sigma \rightarrow N$ be a branched immersion with parallel mean curvature vector $H \neq 
    0$. Then, $H$ has constant length. Furthermore, if $\dim(N) = 4$, then the following holds
    \begin{enumerate}
        \item\label{lem:properties H item 1} There exists a non-zero parallel normal vector field $H^\prime \in \Gamma(du(T_{p} \Sigma)^\perp)$ such that 
        $$\set{H/|H|, H^\prime/|H|}$$
        is a global orthonormal frame in $du(T \Sigma)^\perp$ away from branch points.
        \item\label{lem:properties H item 2} The normal bundle $du(T_{p} \Sigma)^\perp$ is flat, that is, $R^\perp  \equiv 0$.
    \end{enumerate}
\end{lemma}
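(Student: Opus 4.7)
The plan is to establish the three assertions in order: constancy of $|H|$ from metric compatibility of the normal connection, the existence of a second parallel section $H^\prime$ in the four-dimensional case via the canonical almost complex structure on the oriented rank-two normal bundle, and flatness of $R^\perp$ as an immediate consequence of having a global parallel orthonormal frame. The only delicate point is to check smoothness at branch points, and this will follow from the smooth extension of $du(T\Sigma)^\perp$ across $\mathcal B$ already recorded in the preliminaries.

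First, differentiating $|H|^2$ along an arbitrary tangent field $X$ and using the metric compatibility of $\nabla^\perp$ together with $\nabla^\perp H\equiv 0$ gives $X|H|^2=2\inner{\nabla^\perp_X H,H}=0$ on $\Sigma\setminus\mathcal B$. Connectedness of $\Sigma$ and continuity of $|H|$ across the discrete branch set then force $|H|$ to be a positive constant on all of $\Sigma$.

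Next, in the case $\dim N=4$, both $\Sigma$ and the ambient space form $N$ are oriented, so the rank-two bundle $du(T\Sigma)^\perp$ inherits a canonical orientation and hence a canonical almost complex structure $J\colon du(T\Sigma)^\perp\to du(T\Sigma)^\perp$ given by $\pi/2$-rotation in each fiber. This $J$ is parallel with respect to any orientation-preserving metric connection, in particular with respect to $\nabla^\perp$. Setting $H^\prime:=J(H)$, the identities $|H^\prime|=|H|$, $\inner{H,H^\prime}=0$, and $\nabla^\perp H^\prime=J(\nabla^\perp H)=0$ are immediate, and $H^\prime$ extends smoothly across $\mathcal B$ since both $H$ and $J$ do. Normalization then yields the claimed orthonormal parallel frame $\{H/|H|,H^\prime/|H|\}$.

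Finally, given this parallel frame, any normal section can be uniquely written as $\xi=a\,H/|H|+b\,H^\prime/|H|$ with smooth coefficients $a,b$, so that $\nabla^\perp_Y\xi=(Ya)H/|H|+(Yb)H^\prime/|H|$, and a direct computation reduces $R^\perp(X,Y)\xi$ to $((XY-YX-[X,Y])a)H/|H|+((XY-YX-[X,Y])b)H^\prime/|H|$, which vanishes. The main obstacle I anticipate is purely technical: verifying that the parallel, constant-length $H$, a priori defined only on $\Sigma\setminus\mathcal B$, extends smoothly over each isolated branch point so that $H^\prime=J(H)$ is globally defined and smooth. This should follow from a removable-singularity argument in conformal coordinates combined with the smooth extension of the normal bundle already noted in the preliminaries.
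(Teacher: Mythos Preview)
Your proposal is correct and follows essentially the same approach as the paper: differentiate $|H|^2$ using metric compatibility of $\nabla^\perp$, obtain $H'$ by the fiberwise $\pi/2$-rotation (the almost complex structure $J$) on the oriented rank-two normal bundle, and deduce $R^\perp\equiv 0$ from the existence of a global parallel orthonormal frame. Your concern about smooth extension across branch points is slightly more than is needed, since the statement of item~(1) only asserts the frame away from branch points; also note that the lemma is stated for a general Riemannian $N$, not necessarily a space form, though this does not affect your argument.
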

\begin{proof}
    The constancy of $\abs{H}$ follows from that 
    \begin{equation}
        \nabla_X \inner{H,H} = 2 \inner{\nabla^\perp H,H} = 0, \quad \text{for all } X \in \Gamma(du(T \Sigma)).
    \end{equation}
    If $\dim(N) = 4$, then the normal space of $u : \Sigma \rightarrow N$ has dimension 2, which enable us to find a continuous vector field $H^\prime$ via rotating $H$ by the angle $\pi/2$ according to the orientation of $du(T_{p} \Sigma)^\perp$. Since $H$ is parallel, $H^\prime$ is also parallel and $\abs{H^\prime} = \abs{H}$, hence $\set{H/|H|, H^\prime/|H|}$ is a global orthonormal frame in $du(T_{p} \Sigma)^\perp$ away from branch points. The part \eqref{lem:properties H item 2} follows from part \eqref{lem:properties H item 1} and the definition of normal curvature $R^\perp$ in \eqref{eq:defi of normal curvature}.
\end{proof}
%%%%%%%%%%%%%%%%%%%%%%%%%%%%%%%%%%%%%%%%%%%%%%%%%%%%%%%%%%%%
\subsection{Construction of Quadratic Differentials}\label{sec:preliminaries 2}\
\vspace{5pt}

%%%%%%%%%%%%%%%%%%%%%%%%%%%%%%%%%%%%%%%%%%%%%%%%%%%%%%%%%%%%
In this subsection, we present the construction of a series of quadratic differentials on $\Sigma$ and prove the holomorphicity of them. Before displaying the detailed construction, we first establish the following lemma. 
\begin{lemma}
    \label{lem:holomorphic}
    Let $N = \R^n(c)$ be an $n$-dimensional space form of sectional curvature $c \in \R$ and $u : \Sigma \rightarrow \R^n(c)$ be a branched immersion with parallel mean curvature vector $H \neq 
    0$. Then, working on $\Sigma\setminus \mathcal{B}$ away from branch points,  we have 
    \begin{equation}\label{eq0: lem holomorphic}
        \frac{\partial}{\partial \overline{z}} \inner{A\p{\frac{\partial u}{\partial z}, \frac{\partial u}{\partial z}}, {\boldsymbol{\xi}}} = 0,
    \end{equation}
    for any parallel normal section ${\boldsymbol{\xi}} \in \Gamma(du(T \Sigma)^\perp)$.
\end{lemma}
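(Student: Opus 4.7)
The plan is to run the classical Hopf-type computation in conformal coordinates, the two analytic inputs being the parallelism of $\boldsymbol\xi$ and the Codazzi equation (which uses the constant curvature of $N=\R^n(c)$). Setting $f:=\langle A(\partial_z u,\partial_z u),\boldsymbol\xi\rangle$ and differentiating by $\partial_{\bar z}$ via the ambient Levi-Civita connection $\overline\nabla$, the product rule splits the answer into an ``$A$-derivative'' part and a ``$\boldsymbol\xi$-derivative'' part. Since $\nabla^\perp\boldsymbol\xi=0$, the Weingarten formula gives $\overline\nabla_{\partial_{\bar z}u}\boldsymbol\xi=-A_{\boldsymbol\xi}(\partial_{\bar z}u)$, which is tangential, whereas $A(\partial_z u,\partial_z u)$ is normal-valued; hence the $\boldsymbol\xi$-derivative piece drops out.

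For the surviving part, only the normal component of $\overline\nabla_{\partial_{\bar z}u}[A(\partial_z u,\partial_z u)]$ contributes after pairing with $\boldsymbol\xi$. Using the Leibniz rule for the tensorial covariant derivative of $A$, together with the fact that in conformal coordinates the only nonvanishing intrinsic Christoffel symbols are $\Gamma^z_{zz}$ and $\Gamma^{\bar z}_{\bar z\bar z}$, so that $\nabla_{\partial_{\bar z}}\partial_z u=0$, this reduces to showing
\[
(\overline\nabla_{\partial_{\bar z}u}A)(\partial_z u,\partial_z u)=0.
\]
I would then invoke the Codazzi equation \eqref{eq:Codazzi} to swap the differentiation direction, obtaining $(\overline\nabla_{\partial_z u}A)(\partial_{\bar z}u,\partial_z u)$, and expand again by the Leibniz rule. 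Plugging in the conformal identity $A(\partial_{\bar z}u,\partial_z u)=\tfrac{\lambda^2}{2}H$, the PMC condition $\nabla^\perp H=0$, and $\nabla_{\partial_z}\partial_z u=(\partial_z\lambda^2/\lambda^2)\partial_z u$, the two resulting $\tfrac{\partial_z\lambda^2}{2}H$ contributions---one from the Christoffel correction $A(\partial_{\bar z}u,\nabla_{\partial_z}\partial_z u)$ and one from differentiating the prefactor $\lambda^2/2$ inside $\nabla^\perp_{\partial_z u}[A(\partial_{\bar z}u,\partial_z u)]$---appear with opposite signs and cancel. Thus $\partial_{\bar z}f=0$.

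The main obstacle is purely organisational: keeping the tangential and normal components separate when applying $\overline\nabla$ to the tensor $A$, and arranging the two cancelling Christoffel-type terms in a visibly symmetric form so that the Codazzi swap produces a manifest cancellation. No analytic subtlety is expected beyond this, and the computation is valid on all of $\Sigma\setminus\mathcal{B}$ since branch points are excluded by hypothesis. As a by-product, the same computation actually shows that $A(\partial_z u,\partial_z u)$ itself is $\nabla^\perp$-holomorphic as a section of the complexified normal bundle, so that \eqref{eq0: lem holomorphic} holds simultaneously for every parallel $\boldsymbol\xi$.
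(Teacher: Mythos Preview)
Your proposal is correct and follows essentially the same route as the paper's proof: both use the parallelism of $\boldsymbol\xi$ to drop one term, the conformal identity $\nabla_{\partial_{\bar z}}\partial_z u=0$ to pass to $(\overline\nabla_{\partial_{\bar z}u}A)(\partial_z u,\partial_z u)$, the Codazzi swap, and then the PMC condition together with the Christoffel formula $\nabla_{\partial_z}\partial_z u=(\partial_z\log\lambda^2)\,\partial_z u$ to effect the final cancellation. The only cosmetic difference is that you track the tangential/normal split via Weingarten and the ambient $\overline\nabla$, whereas the paper works directly with $\nabla^\perp$; your closing remark that $A(\partial_z u,\partial_z u)$ is $\nabla^\perp$-holomorphic is exactly what the paper's identity \eqref{eq3: lem holomorphic} records.
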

\begin{proof}
    Let ${\boldsymbol{\xi}}$ be a parallel normal section, direct computation yields
\begin{align}\label{eq1: lem holomorphic}
    \frac{\partial}{\partial \overline{z}} \inner{A\p{\frac{\partial u}{\partial z}, \frac{\partial u}{\partial z}}, {\boldsymbol{\xi}}} &= \inner{ \nabla_{\frac{\partial u}{\partial \bar{z}}}^\perp A\p{\frac{\partial u}{\partial z}, \frac{\partial u}{\partial {z}}}, {\boldsymbol{\xi}} } + \inner{A\p{\frac{\partial u}{\partial z}, \frac{\partial u}{\partial {z}}}, \nabla_{\frac{\partial u}{\partial \bar{z}}}^\perp {\boldsymbol{\xi}} }\nonumber \\
&= \inner{\nabla_{\frac{\partial u}{\partial \bar{z}}}^\perp A\p{\frac{\partial u}{\partial z}, \frac{\partial u}{\partial {z}}}, {\boldsymbol{\xi}} }.
\end{align}
The Gauss formula gives
\begin{equation}\label{eq:Gauss-complex}
\overline{\nabla}_{\frac{\partial u}{\partial \bar{z}}}\frac{\partial u}{\partial z} = du\big(\nabla_{\bar z}\partial_z\big)+A(\partial_{\bar z},\partial_z),
\qquad
\overline{\nabla}_{\frac{\partial u}{\partial z}}\frac{\partial u}{\partial \bar{z}} = du\big(\nabla_{z}\partial_{\bar z}\big)+A(\partial_{z},\partial_{\bar z}).
\end{equation}
In conformal complex coordinates for $g$ one has $\nabla_{\bar z}\partial_z=\nabla_{z}\partial_{\bar z}=0$,
hence the tangential parts vanish:
\begin{equation}\label{eq:torsionfree}
\nabla_{\frac{\partial u}{\partial \bar{z}}}\frac{\partial u}{\partial z} = 0,
\qquad
\nabla_{\frac{\partial u}{\partial z}}\frac{\partial u}{\partial \bar{z}} = 0 .
\end{equation}
Moreover, by weak conformality of $u$, we have
\begin{equation}\label{eq:A-zbarz-H}
A\left(\frac{\partial u}{\partial \bar{z}},\frac{\partial u}{\partial z}\right)=\frac18|\nabla u|^2\,H .
\end{equation}
Utilizing \eqref{eq:torsionfree}, the last term in \eqref{eq1: lem holomorphic} can be simplified to 
\begin{align}\label{eq2: lem holomorphic}
\nabla_{\frac{\partial u}{\partial \bar{z}}}^\perp A\p{\frac{\partial u}{\partial z}, \frac{\partial u}{\partial {z}}} &= (\nabla^\perp_{\frac{\partial u}{\partial \bar{z}}} A)\p{\frac{\partial u}{\partial z}, \frac{\partial u}{\partial {z}}} +2 A\p{\nabla_{\frac{\partial u}{\partial \bar{z}}} \frac{\partial u}{\partial z}, \frac{\partial u}{\partial {z}}}\nonumber  \\
&= (\nabla^\perp_{\frac{\partial u}{\partial \bar{z}}} A)\p{\frac{\partial u}{\partial z}, \frac{\partial u}{\partial {z}}}.
\end{align}
Writing the Codazzi equation \eqref{eq:Codazzi} in space form under the complex coordinates, we have
\begin{equation*}
    (\nabla^\perp_{\frac{\partial u}{\partial \bar{z}}} A)\p{\frac{\partial u}{\partial z}, \frac{\partial u}{\partial {z}}} = (\nabla^\perp_{\frac{\partial u}{\partial z}} A)\p{\frac{\partial u}{\partial \bar{z}}, \frac{\partial u}{\partial {z}}}.
\end{equation*}
Substituting it into the last term in \eqref{eq2: lem holomorphic} and recalling \eqref{eq:torsionfree} and \eqref{eq:A-zbarz-H}
% \begin{equation*}
%     A\p{\frac{\partial u}{\partial \bar{z}}, \frac{\partial u}{\partial z}} = \frac{1}{8} \abs{\nabla u}^2 H(u) 
% \end{equation*}
yield 
\begin{align}\label{eq3: lem holomorphic}
(\nabla^\perp_{\frac{\partial u}{\partial \bar{z}}} A)\p{\frac{\partial u}{\partial z}, \frac{\partial u}{\partial {z}}} &= (\nabla^\perp_{\frac{\partial u}{\partial z}} A)\p{\frac{\partial u}{\partial \bar{z}}, \frac{\partial u}{\partial {z}}}\nonumber\\
&= {\nabla}^\perp_{\frac{\partial u}{\partial z}}\p{ A\p{\frac{\partial u}{\partial \bar{z}}, \frac{\partial u}{\partial {z}}}} - A\p{{\nabla}_{\frac{\partial u}{\partial z}} \frac{\partial u}{\partial \bar{z}}, \frac{\partial u}{\partial {z}}} - A\p{\frac{\partial u}{\partial \bar{z}}, {\nabla}_{\frac{\partial u}{\partial z}}\frac{\partial u}{\partial {z}}}\nonumber\\
& = \nabla^\perp_{\frac{\partial u}{\partial z}} \p{\frac{1}{8} \abs{\nabla u}^2 H} - \frac{1}{4}\frac{\partial \log \abs{\nabla u} }{\partial z} \abs{\nabla u}^2 H\nonumber\\
& = 0,
\end{align}
where in the third equality we used the identity  
\begin{equation}\label{eq4: lem holomorphic}
    {\nabla}_{\frac{\partial u}{\partial z}}\frac{\partial u}{\partial {z}} = 2 \frac{\partial \log \abs{\nabla u} }{\partial z} \frac{\partial u}{\partial z}.
\end{equation}
To see \eqref{eq4: lem holomorphic}, we write 
\begin{equation}\label{eq5: lem holomorphic}
    {\nabla}_{\frac{\partial u}{\partial z}}\frac{\partial u}{\partial {z}} = f_1 \frac{\partial u}{\partial {z}} + f_2 \frac{\partial u}{\partial \bar{z}}
\end{equation}
for some coefficients $f_1,\,f_2$ to be determined. By \eqref{eq:complex eq 1}, \eqref{eq:torsionfree} and \eqref{eq:A-zbarz-H}, taking the inner product with $\partial_{\bar{z}} u$ in \eqref{eq5: lem holomorphic} yields
\begin{equation*}
    f_1 \p{\frac{1}{2} \abs{\nabla u}^2} = \inner{{\nabla}_{\frac{\partial u}{\partial z}}\frac{\partial u}{\partial {z}}, \frac{\partial u}{\partial \bar{z}}} = \nabla_{\frac{\partial }{\partial {z}}} \inner{\frac{\partial u}{\partial {z}}, \frac{\partial u}{\partial \bar{z}}} = \frac{1}{2} \frac{\partial}{\partial z} \p{\abs{\nabla u}^2},
\end{equation*}
that is,
\begin{equation}\label{eq6: lem holomorphic}
    f_1 = 2 \frac{\partial \log \abs{\nabla u}}{\partial z}.
\end{equation}
Similarly, by \eqref{eq:complex eq 1}, taking the inner product with $\partial_{z} u$ in \eqref{eq5: lem holomorphic} yields
\begin{equation*}
    f_2  \p{\frac{1}{2} \abs{\nabla u}^2} = \inner{{\nabla}_{\frac{\partial u}{\partial z}}\frac{\partial u}{\partial {z}}, \frac{\partial u}{\partial z}} = \frac{\partial}{\partial z} \inner{\frac{\partial u}{\partial {z}}, \frac{\partial u}{\partial z}} = 0,
\end{equation*}
that is, $f_2 = 0$. combining this with \eqref{eq6: lem holomorphic}, \eqref{eq4: lem holomorphic} follows. In conclusion, plugging \eqref{eq3: lem holomorphic} into \eqref{eq1: lem holomorphic} leads to the \eqref{eq0: lem holomorphic}, as asserted in Lemma \ref{lem:holomorphic}.
\end{proof}

Now, let $N = \R^4(c)$ be the four dimensional space form of constant sectional curvature $c \in \R$. In the isothermal coordinate of $\Sigma \setminus \mathcal{B}$, we define the following two quadratic differentials 
\begin{equation}\label{eq:defi holomorphic 1}
    \begin{aligned}
        \Phi_H &:= \inner{A\p{\frac{\partial u}{\partial z}, \frac{\partial u}{\partial z}}, H} dz\otimes dz = \varphi(H) dz\otimes dz,\\
        \Phi_{H^\prime} &:=\inner{A\p{\frac{\partial u}{\partial z}, \frac{\partial u}{\partial z}}, H^\prime } dz \otimes dz = \varphi(H^\prime)dz \otimes dz,
    \end{aligned}
\end{equation}
where $H^\prime$ is defined in Lemma \ref{lem:properties H}. Indeed, the expressions \eqref{eq:defi holomorphic 1} of $\Phi_H$ and $\Phi_{H^\prime}$ do not depend on the choice of isothermal coordinates, hence they are well-defined globally on $\Sigma$ after extending them smoothly over the branch points. By the parallelism of $H$ and $H^\prime$, Lemma \ref{lem:holomorphic} tells us that $\Phi_H$ and $\Phi_{H^\prime}$ are both holomorphic on $\Sigma$.

Furthermore, for any linear combination ${\boldsymbol{\eta}} = a H + b H^\prime$ of $H$ and $H^\prime$ with constant coefficients $a,\, b \in \R$, the quadratic differential 
\begin{equation}\label{eq:defi holomorphic 2}
    \Phi_{\boldsymbol{\eta}} := \inner{A\p{\frac{\partial u}{\partial z}, \frac{\partial u}{\partial z}}, {\boldsymbol{\eta}}} dz\otimes dz = a \Phi_H + b \Phi_{H^\prime}
\end{equation}
is also holomorphic on $\Sigma$.
%%%%%%%%%%%%%%%%%%%%%%%%%%%%%%%%%%%%%%%%%%%%%%%%%%%%%%%%%%%%%%
\vskip1cm
%%%%%%%%%%%%%%%%%%%%%%%%%%%%%%%%%%%%%%%%%%%%%%%%%%%%%%%%%%%%%%%%%%

\section{A Dimension Reduction Result}\label{sec:dimension-reduction}

Under the assumption that a branched immersion \(u:\Sigma\to B^n\subset \R^n(c)\) has non-zero parallel mean curvature in $B^n$, non-zero constant contact angle along $u(\partial \Sigma) \subset \mathcal{K}$ and its image lies in a totally umbilical hypersurface \(M^{n-1}\), in this section, we show that the PMC condition descends to \(M\), i.e. the mean curvature \(H\) in $M$ is parallel in the normal bundle of \(u(\Sigma)\subset M\). Moreover, non-zero contact angle condition excludes the degenerate case \(\langle \boldsymbol{n},{\boldsymbol{\xi}}\rangle=\pm1\) and the constant contact angle condition is preserved after intersecting with \(M\), that is, \(u(\partial\Sigma)\) meets \(C=M\cap\partial B^n\) inside \(M\) at a non-zero constant angle \(\theta_M\) given by
\begin{equation*}
    \sin\theta_M=\frac{\sin\theta}{\sqrt{1-\langle \boldsymbol{n},{\boldsymbol{\xi}}\rangle^2}}\neq 0,
\end{equation*}
where $\boldsymbol{\xi}$ is the unit normal field of $M$.

\begin{prop}\label{lem:PMC-and-angle-in-umbilical-S}
Let $(N,h)=\R^n(c)$ be an $n$-dimensional space form of constant sectional curvature $c \in \R$ and let $B^n\subset N$ be a geodesic ball with smooth boundary $\partial B^n$ and outward unit normal vector $\boldsymbol{n}$.
Let $u: \Sigma \rightarrow B^n$ be a branched immersed surface with free boundary constraint $u(\partial\Sigma) \subset \partial B^n$. Assume: 
\begin{enumerate}
  \item[(i)] the mean curvature vector $\bar H$ of $\Sigma\subset N$ is non-zero and parallel in the normal bundle, i.e.\ $\nabla^\perp \bar H\equiv 0$ and $\bar H\not\equiv 0$;
  \item[(ii)] $u(\Sigma)$ meets $\partial B^n$ at a constant non-zero contact angle $\theta\in(0,\pi/2]$ along $\partial\Sigma$ satisfying
  \begin{equation}\label{eq:capillary-angle}
  \langle \boldsymbol{\nu},\boldsymbol{n} \rangle = \sin\theta \neq 0 \quad \text{on } u(\partial\Sigma),
  \end{equation}
  where $\boldsymbol{\nu}$ is the outward unit conormal of $u(\partial\Sigma)\subset u(\Sigma)$;
  \item[(iii)] $u(\Sigma)$ is contained in a totally umbilical hypersurface $M^{n-1}\subset N$ with unit normal field ${\boldsymbol{\xi}}$.
\end{enumerate}
Then, the following holds
\begin{enumerate}
  \item[(a)] The mean curvature vector $H$ of $\Sigma$ viewed as a surface in $M$ is parallel in the normal bundle of $\Sigma\subset M$, i.e.\ $\nabla^{\perp,M}H\equiv 0$.
  \item[(b)] Along $\partial\Sigma$, $u(\partial\Sigma)$ has a non-zero constant contact angle $\theta_M$ with the smooth totally umbilical hypersurface $C:=M\cap \partial B^n$ inside $M$, and
  \begin{equation}\label{eq:thetaS-formula}
    \sin\theta_M \;=\; \frac{\sin\theta}{\sqrt{1-\langle \boldsymbol{n},{\boldsymbol{\xi}}\rangle^2}}
    \;=\; \frac{\sin\theta}{\sin\alpha},
  \end{equation}
  where $\alpha\in(0,\pi)$ is the (constant) contact angle between the hypersurfaces $M$ and $\partial B^n$ along $\partial\Sigma$, defined by $\cos\alpha:=\langle \boldsymbol{n},{\boldsymbol{\xi}}\rangle$.
  In particular, the degenerate case $\langle \boldsymbol{n},{\boldsymbol{\xi}}\rangle=\pm 1$ cannot occur along $\partial\Sigma$, hence $\theta_M$ is well-defined given by \eqref{eq:thetaS-formula}.
\end{enumerate}
\end{prop}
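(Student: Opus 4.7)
My approach exploits the Gauss formula for the nested inclusion $\Sigma\subset M\subset \R^n(c)$ together with the total umbilicity of $M$ in the space form. Since $\bar\nabla^N_X {\boldsymbol{\xi}} = -\lambda X$ for every $X\in TM$ with $\lambda$ a genuine constant (this is precisely where $N=\R^n(c)$ enters), the second fundamental forms satisfy $A^{\Sigma/N}=A^{\Sigma/M}+\lambda\,g\,{\boldsymbol{\xi}}$, and consequently $\bar H = H + \lambda{\boldsymbol{\xi}}$, where $H$ is the mean curvature of $\Sigma$ in $M$. For part (a), I will decompose any normal section of $\Sigma$ in $N$ as $V=V_M+\eta{\boldsymbol{\xi}}$ with $V_M\in N(\Sigma,M)$, and verify by a short calculation that
\[
\nabla^{\perp,N}_X(V_M+\eta{\boldsymbol{\xi}})=\nabla^{\perp,M}_X V_M+X(\eta){\boldsymbol{\xi}},
\]
where cross terms vanish by the Weingarten formula for ${\boldsymbol{\xi}}$ and the orthogonality $V_M\perp T\Sigma$. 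Applying this identity to $\bar H=H+\lambda{\boldsymbol{\xi}}$ with $\lambda$ constant, the hypothesis $\nabla^{\perp,N}\bar H\equiv 0$ immediately yields $\nabla^{\perp,M}H\equiv 0$.

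For part (b), the first task is to establish transversality of $M$ and $\partial B^n$ along $\partial\Sigma$. Since $\boldsymbol{\nu}\in T\Sigma\subset TM$ one has $\langle\boldsymbol{\nu},{\boldsymbol{\xi}}\rangle=0$; combined with the decomposition $\boldsymbol{\nu}=\cos\theta\,\boldsymbol{\mu}+\sin\theta\,\boldsymbol{n}$ from Remark \ref{rmk 1}, the equality $\boldsymbol{n}=\pm{\boldsymbol{\xi}}$ would force $\sin\theta=0$, contradicting the non-zero contact angle hypothesis. Hence $|\langle\boldsymbol{n},{\boldsymbol{\xi}}\rangle|<1$ and $\alpha:=\arccos\langle\boldsymbol{n},{\boldsymbol{\xi}}\rangle\in(0,\pi)$ is well-defined along $\partial\Sigma$. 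Next I will show $\alpha$ is constant along $\partial\Sigma$: differentiating $\cos\alpha$ along the boundary tangent $\boldsymbol{\tau}$ and invoking the Weingarten identities for the two totally umbilic hypersurfaces $M$ and $\partial B^n$ in $\R^n(c)$ (with constant umbilic factors $\lambda$ and $\kappa$),
\[
\boldsymbol{\tau}(\cos\alpha)=\langle\bar\nabla^N_{\boldsymbol{\tau}}\boldsymbol{n},{\boldsymbol{\xi}}\rangle+\langle\boldsymbol{n},\bar\nabla^N_{\boldsymbol{\tau}}{\boldsymbol{\xi}}\rangle=-\kappa\langle\boldsymbol{\tau},{\boldsymbol{\xi}}\rangle-\lambda\langle\boldsymbol{n},\boldsymbol{\tau}\rangle=0,
\]
since $\boldsymbol{\tau}\in TM\cap T\partial B^n$.

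Finally, the outward unit normal of the curve $C=M\cap\partial B^n$ inside $M$ is obtained by normalizing the projection of $\boldsymbol{n}$ onto $TM$, namely $\boldsymbol{n}^M=(\boldsymbol{n}-\cos\alpha\,{\boldsymbol{\xi}})/\sin\alpha$. The contact-angle identity then follows from the direct computation
\[
\sin\theta_M=\langle\boldsymbol{\nu},\boldsymbol{n}^M\rangle=\frac{\langle\boldsymbol{\nu},\boldsymbol{n}\rangle-\cos\alpha\,\langle\boldsymbol{\nu},{\boldsymbol{\xi}}\rangle}{\sin\alpha}=\frac{\sin\theta}{\sin\alpha},
\]
which is constant and non-zero as required, and the estimate $\sin\theta\le\sin\alpha$ (hence $\theta_M\ge\theta$) drops out of $\boldsymbol{\nu}$ being a unit vector in $TM$. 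I expect the main technical point to lie in part (a): verifying that the orthogonal splitting $N(\Sigma,N)=N(\Sigma,M)\oplus\mathbb{R}\,{\boldsymbol{\xi}}$ interacts cleanly with both normal connections, and confirming that the umbilic factor $\lambda$ of $M$ really is a constant (not merely a smooth function), so that the $X(\lambda){\boldsymbol{\xi}}$ term drops out of the identity for $\nabla^{\perp,N}\bar H$. This constancy is exactly where the space-form hypothesis is essential, and once it is secured the rest of the argument is a sequence of bookkeeping computations using orthogonal decompositions.
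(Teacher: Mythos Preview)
Your proposal is correct and follows essentially the same route as the paper: the decomposition $\bar H=H+\lambda{\boldsymbol{\xi}}$ via the nested Gauss formula, the identity $\nabla^{\perp,N}_X(V_M+\eta{\boldsymbol{\xi}})=\nabla^{\perp,M}_X V_M+X(\eta){\boldsymbol{\xi}}$ (the paper verifies the two pieces separately rather than as a single formula), the constancy of $\lambda$ from Codazzi in a space form, the transversality argument via $\langle\boldsymbol{\nu},{\boldsymbol{\xi}}\rangle=0$, and the differentiation of $\langle\boldsymbol{n},{\boldsymbol{\xi}}\rangle$ along $\boldsymbol{\tau}$ using umbilicity of both hypersurfaces are all exactly what the paper does. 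The only cosmetic difference is that the paper phrases transversality as $T_pM=T_p(\partial B^n)$ when $\boldsymbol{n}=\pm{\boldsymbol{\xi}}$, whereas you go directly through $\langle\boldsymbol{\nu},{\boldsymbol{\xi}}\rangle=0$; both reach $\sin\theta=0$ immediately.
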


\begin{proof}
To distinguish, during the proof of Proposition \ref{lem:PMC-and-angle-in-umbilical-S}, let $\overline{\nabla}$ be the Levi-Civita connection of $N$, $\nabla^M$ that of $M$, and $\nabla^\Sigma$ that of $\Sigma$. Let $\bar A$ be the second fundamental form of $u(\Sigma)\subset N$, $A$ that of $u(\Sigma)    \subset M$, and $A^M$ that of $M\subset N$.
Since $M$ is totally umbilical, there exists a function $\lambda$ on $M$ such that
\[
A^M(X,Y)=\lambda\,\langle X,Y\rangle\,{\boldsymbol{\xi}} \qquad \text{for all }X,Y\in TM.
\]
For $X,Y\in du(T\Sigma)\subset TM$, comparing the Gauss formulas for $u(\Sigma) \subset N$ and the composition $u(\Sigma)\subset M\subset N$ gives
\begin{equation}\label{eq:B-decomp}
\bar A(X,Y)=A(X,Y)+A^M(X,Y)=A(X,Y)+\lambda\,\langle X,Y\rangle\,{\boldsymbol{\xi}}.
\end{equation}
Taking the trace over an orthonormal frame $\{e_1,e_2\}$ of $T\Sigma$ yields the mean curvature decomposition
\begin{equation}\label{eq:H-decomp}
\bar H = H + \lambda\,{\boldsymbol{\xi}}.
\end{equation}

We next compare normal connections. Along $\Sigma$, the ambient normal bundle splits orthogonally as
\[
N_\Sigma^{N} = N_\Sigma^{M}\oplus \mathrm{span}\{{\boldsymbol{\xi}}\},
\]
where $N_\Sigma^{M} := (T\Sigma)^\perp\cap TM$ is the normal bundle of $\Sigma$ inside $M$.
If ${\boldsymbol{\eta}}_M$ is a section of $N_\Sigma^{M}$ and $X\in T\Sigma$, then ${\boldsymbol{\eta}}_M\in TM$ and $A^M(X,{\boldsymbol{\eta}}_M)=\lambda\langle X,{\boldsymbol{\eta}}_M\rangle{\boldsymbol{\xi}}=0$, hence
\[
\overline{\nabla}_X {\boldsymbol{\eta}}_M = \nabla^M_X {\boldsymbol{\eta}}_M,
\]
so taking normal components gives
\begin{equation}\label{eq:norm-conn-agree}
\nabla^\perp_X {\boldsymbol{\eta}}_M = \nabla^{\perp,M}_X {\boldsymbol{\eta}}_M.
\end{equation}
Moreover, the Weingarten formula for $M\subset N$ gives $\overline{\nabla}_X{\boldsymbol{\xi}}=-A_{\boldsymbol{\xi}} X$. Since $M$ is umbilical,
\begin{equation}\label{eq:d-xi}
\overline{\nabla}_X{\boldsymbol{\xi}}=-\lambda X \in T\Sigma,
\qquad\Rightarrow\qquad
\nabla^\perp_X{\boldsymbol{\xi}}=0.
\end{equation}
Now differentiate \eqref{eq:H-decomp} in the ambient normal bundle to get
\[
0=\nabla^\perp_X \bar H
=\nabla^\perp_X H + \nabla^\perp_X(\lambda{\boldsymbol{\xi}})
=\nabla^{\perp,M}_X H + X(\lambda)\,{\boldsymbol{\xi}} + \lambda\,\nabla^\perp_X{\boldsymbol{\xi}},
\]
using \eqref{eq:norm-conn-agree}.
By \eqref{eq:d-xi}, $\nabla^\perp_X{\boldsymbol{\xi}}=0$.
Finally, since $N$ is a space form, the Codazzi equation \eqref{eq:Codazzi} for the totally umbilical hypersurface $M$ implies $\lambda$ is constant on $M$, hence $X(\lambda)=0$ for all $X\in TM$.
Therefore $\nabla^{\perp,M}_XH=0$ for all $X\in T\Sigma$, i.e.\ $\nabla^{\perp,M}H\equiv 0$, proving (a).

Before proving (b), we first show the transversality of $M$ and $\partial B^n$ to exclude the degenerate case $\langle \boldsymbol{n},{\boldsymbol{\xi}}\rangle=\pm 1$.
% Along $\partial\Sigma$, recall $\boldsymbol{\tau}$ is the unit tangent to $\partial \Sigma$ and let $\boldsymbol{\nu}$ be the outward unit conormal of $\partial\Sigma\subset\Sigma$.
Recall that $\boldsymbol{\tau}\in T(\partial B^n)\cap T\Sigma\subset TM$ and $\langle \boldsymbol{\tau},\boldsymbol{\nu}\rangle=0$.
Assume for contradiction that at some $p\in\partial\Sigma$ one has $\langle \boldsymbol{n}(p),{\boldsymbol{\xi}}(p)\rangle=\pm 1$, i.e.\ $\boldsymbol{n}(p) = \pm {\boldsymbol{\xi}}(p)$.
Then
\[
T_pM = {\boldsymbol{\xi}}(p)^\perp = \boldsymbol{n}(p)^\perp = T_p(\partial B^n),
\]
and since $du(T_p\Sigma)\subset T_pM$, we obtain $du(T_p\Sigma)\subset T_p(\partial B^n)$, in particular $\langle \boldsymbol{\nu}(p),\boldsymbol{n}(p)\rangle=0$. This forces $\sin\theta=0$, contradicting the non-zero angle assumption \eqref{eq:capillary-angle}.
Thus
\begin{equation}\label{eq:transverse}
|\langle \boldsymbol{n},{\boldsymbol{\xi}}\rangle|<1 \quad \text{everywhere on }\partial\Sigma,
\end{equation}
so the intersection $C:=M\cap \partial B^n$ is a smooth hypersurface in $M$.

Along $\partial\Sigma$, the unit normal $\boldsymbol{c}$ of $C\subset M$ can be given by projecting $\boldsymbol{n}$ onto $TM$ and normalizing:
\begin{equation}\label{eq:mu-def}
\boldsymbol{c} := \frac{\boldsymbol{n} - \langle \boldsymbol{n},{\boldsymbol{\xi}}\rangle\,{\boldsymbol{\xi}}}{\sqrt{1-\langle \boldsymbol{n},{\boldsymbol{\xi}}\rangle^2}}
\in TM,
\end{equation}
which is well-defined by \eqref{eq:transverse}.
Hence, the contact angle $\theta_M$ between $\Sigma$ and $C$ inside $M$ satisfies
\begin{equation}\label{eq:capillary-angle-S}
\langle \boldsymbol{\nu},\boldsymbol{c}\rangle = \sin\theta_M \quad \text{on }\partial\Sigma.
\end{equation}
Since $\boldsymbol{\nu}\in du(T\Sigma)\subset TM$, we have $\langle \boldsymbol{\nu}, {\boldsymbol{\xi}}\rangle=0$, and therefore by \eqref{eq:mu-def}
\[
\langle \boldsymbol{\nu},\boldsymbol{c}\rangle
=\frac{\langle \boldsymbol{\nu},\boldsymbol{n}\rangle - \langle \boldsymbol{n},{\boldsymbol{\xi}}\rangle\langle \boldsymbol{\nu},{\boldsymbol{\xi}}\rangle}{\sqrt{1-\langle \boldsymbol{n},{\boldsymbol{\xi}}\rangle^2}}
=\frac{\langle \boldsymbol{\nu},\boldsymbol{n}\rangle}{\sqrt{1-\langle \boldsymbol{n},{\boldsymbol{\xi}}\rangle^2}}
=\frac{\sin\theta}{\sqrt{1-\langle \boldsymbol{n},{\boldsymbol{\xi}}\rangle^2}}.
\]
This proves the first identity in \eqref{eq:thetaS-formula}.

It remains to show that $\theta_M$ is constant along $\partial\Sigma$.
Set $f:=\langle \boldsymbol{n},{\boldsymbol{\xi}}\rangle$ along $\partial\Sigma$. Differentiating along the unit tangent $\boldsymbol{\tau}$ gives
\[
\nabla_{\boldsymbol{\tau}} f=\langle \overline{\nabla}_{\boldsymbol{\tau}}\boldsymbol{n},{\boldsymbol{\xi}}\rangle+\langle \boldsymbol{n},\overline{\nabla}_{\boldsymbol{\tau}}{\boldsymbol{\xi}}\rangle.
\]
Because $\partial B^n$ is a geodesic sphere in a space form, it is totally umbilical, so $\overline{\nabla}_{\boldsymbol{\tau}}\nu=-\kappa\,\boldsymbol{\tau}$ for some constant $\kappa$. And $M$ is totally umbilical, $\overline{\nabla}_{\boldsymbol{\tau}}{\boldsymbol{\xi}}=-\lambda\,\boldsymbol{\tau}$.
Using $\langle \boldsymbol{\tau},{\boldsymbol{\xi}}\rangle=0$ and $\langle \boldsymbol{\tau},\nu\rangle=0$, we obtain $\nabla_{\boldsymbol{\tau}}f=0$.
Hence, $f=\langle \boldsymbol{n},{\boldsymbol{\xi}}\rangle$ is constant along each connected component of $u(\partial\Sigma)$, and $\theta_M$ is constant by the constancy of $\theta$.
Writing $f=\cos\alpha$  gives the second identity in \eqref{eq:thetaS-formula}.
This proves (b) and completes the proof of Proposition \ref{lem:PMC-and-angle-in-umbilical-S}.
\end{proof}

%%%%%%%%%%%%%%%%%%%%%%%%%%%%%%%%%%%%%%%%%%%%%%%%%%%%%%%%%%%%%%%%%%%%%%%%%%%%
\vspace{1cm}
\section{Proof of Main Theorem}\label{sec:proof-main}
%%%%%%%%%%%%%%%%%%%%%%%%%%%%%%%%%%%%%%%%%%%%%%%%%%%%%%%%%%%%%%
In this section, we complete the proofs of the main results---Theorem~\ref{main theorem 1} and Theorem~\ref{main theorem 2}.

%%%%%%%%%%%%%%%%%%%%%%%%%%%%%%%%%%%%%%%%%%%%%%%%%%%%%%%%%%%%%%%%%%%%%%%%%
\subsection{Proof of Theorem~\ref{main theorem 1}}\label{sec:proof-main 4.1}\ 
\vskip5pt

%%%%%%%%%%%%%%%%%%%%%%%%%%%%%%%%%%%%%%%%%%%%%%%%%%%%%%%%%%%%%%%%%%%%%%%%%%%%%%%%%%%%%%%%%%%%%%

We begin with the proof of Theorem \ref{main theorem 1}, where the  proof is inspired by the holomorphic quadratic differential method utilized in \cite{yau1974,hoffman1973jdg,chen-Chern72,Fraser-Schoen2015}, combined with our dimensional reduction Proposition \ref{lem:PMC-and-angle-in-umbilical-S}.
\begin{proof}[\textbf{Proof of Theorem \ref{main theorem 1}}]
Let $u : D \rightarrow B^4$ be a branched immersion with non-zero parallel mean curvature vector from the unit disk $D$ into a geodesic ball $B^4$ of a $4$-dimensional space $\R^4(c)$ such that $u(D)$ meets $\partial B^4$ at a constant angle $\theta \neq 0$. Work on $u(D)$ away from branch points.  Combining \eqref{eq:defi holomorphic 1} with Lemma \ref{lem:holomorphic}, $\Phi_H$ and $\Phi_{H^\prime}$ are holomorphic on $D$ away from branch points, and they extend holomorphically across branch points.

First, we claim the following:
\smallskip
\claim\label{claim 1}  Either $\Phi_H\equiv 0$, or else $\beta:=\Phi_{H^\prime}/\Phi_H$ is a real constant. In either case there exists a parallel unit
normal field ${\boldsymbol{\eta}}$ and a constant $\lambda$ such that $A_{{\boldsymbol{\eta}}}=\lambda I$.

\begin{proof}[\textbf{Proof of Claim~\ref{claim 1}}]
Because $H,H^\prime$ are parallel, the normal bundle is flat by Lemma \ref{lem:properties H} and the Ricci equation \eqref{eq:Ricci} in a space form gives
$[A_{H},A_{H^\prime}]=0$, hence also $[A_{H}^\circ,A_{H^\prime}^\circ]=0$, where $A^\circ_{{\boldsymbol{\xi}}}$ denotes the trace-free part of the shape operator $A_{\boldsymbol{\xi}}$ in direction ${\boldsymbol{\xi}}$. Recall that in dimension two, traceless symmetric operators have a unique pair of principal directions unless zero, so commuting forces them to share those directions, hence one is a scalar multiple of the other. Then, away from branch points, a direct computation yields
\[
[A_{H}^\circ,A_{H^\prime}^\circ]= \frac{8}{\abs{\nabla u}^4} \Im(\varphi(H^\prime)\overline{\varphi(H)}) \mathcal{J},
\]
where $\mathcal{J}$ is the complex structure on $u(D)$.
So $[A_{H}^\circ,A_{H^\prime}^\circ]=0$ implies $\Im(\varphi(H^\prime)\overline{\varphi(H)})=0$.
Therefore on the open set $\{\Phi_H\neq 0\}$ the quotient $\beta=\varphi(H^\prime)/\varphi(H)=\Phi_{H^\prime}/\Phi_H$ is real--valued.
Since $\beta$ is meromorphic and real--valued on a dense open set, the open mapping theorem forces $\beta$ to be constant unless $\Phi_H\equiv 0$.

If $\Phi_H\equiv 0$, set ${\boldsymbol{\eta}}=H$; then $A_{{\boldsymbol{\eta}}}^\circ\equiv 0$ and $A_{{\boldsymbol{\eta}}}=\lambda I$ with
$\lambda=\tfrac12\mathrm{trace} A_{H}=|H|$ being constant.
If $\Phi_H\not\equiv 0$, write $\beta=-\tan\alpha$ with constant $\alpha$ and define
\[
{\boldsymbol{\eta}} := (\sin\alpha)\,H+(\cos\alpha)\,H^\prime.
\]
Then ${\boldsymbol{\eta}}$ is parallel and
$\Phi_{\boldsymbol{\eta}}=(\sin\alpha)\Phi_H+(\cos\alpha)\Phi_{H^\prime}=(\sin\alpha+\beta\cos\alpha)\Phi_H\equiv 0$,
so $A_{{\boldsymbol{\eta}}}^\circ\equiv 0$ and $A_{{\boldsymbol{\eta}}}=\lambda I$ with constant $\lambda=\tfrac12\mathrm{trace} A_{{\boldsymbol{\eta}}}$. Whenever in any cases, we prove the Claim \ref{claim 1}.
\end{proof}

By Claim~\ref{claim 1}, there is a parallel unit normal field ${\boldsymbol{\eta}}$ and a constant $\lambda$ such that $A_{{\boldsymbol{\eta}}}=\lambda I$.
We show that this forces $u(D)$ to lie in a $3$--dimensional totally umbilic hypersurface $P^3$ with constant principal curvature
$\lambda$.

\noindent\textbf{Step 1}: $u(D)$ is contained in a $3$-dimensional totally umbilic hypersurface $P^3$ of $\R^4(c)$.
\smallskip

\noindent\emph{Case $c=0$ (Euclidean).}
% \smallskip
In $\R^4$ the Weingarten formula gives, for any tangent $X$ to $u(D)$,
\[
\overline{\nabla}_X {\boldsymbol{\eta}} = -A_{{\boldsymbol{\eta}}}X + \nabla^\perp_X{\boldsymbol{\eta}} = -\lambda X.
\]
If $\lambda=0$, then $\overline{\nabla}{\boldsymbol{\eta}}\equiv 0$, so ${\boldsymbol{\eta}}$ is a constant vector and $u(D)$ lies in an affine $3$--plane.
If $\lambda\neq 0$, define $p:=x+\lambda^{-1}{\boldsymbol{\eta}}$ where $x$ is the position vector of $u(D)$ in $\R^4$. Then,
$\overline{\nabla}_X p = X + \lambda^{-1}\overline{\nabla}_X{\boldsymbol{\eta}} =0$, so $p$ is constant and
$|x-p|=|\lambda|^{-1}$; hence $u(D)$ lies in a round $3$-sphere $\S^3(p,|\lambda|^{-1})$.
In either case, $u(D)\subset P^3$ with $P^3$ being totally umbilic in $\R^4$ and having principal curvature $\lambda$.

\smallskip
\noindent\emph{Case $c>0$ (sphere).}
Realize $\S^4(c)$ as $\{x\in\R^5:\langle x,x\rangle=1/c\}$.
Since ${\boldsymbol{\eta}}$ is tangent to $\S^4(c)$ along $u(D)$, one has $\langle x,{\boldsymbol{\eta}}\rangle=0$ and $\nabla^{\R^5}_X {\boldsymbol{\eta}} = \overline{\nabla}_X {\boldsymbol{\eta}}$.
Thus, for tangent $X$ to $u(D)$, $\overline{\nabla}_X{\boldsymbol{\eta}}=-\lambda X$ as above. Define the ambient vector
\[
a:={\boldsymbol{\eta}}+\lambda x\in\R^5.
\]
Then $\nabla^{\R^5}_X a = \overline{\nabla}_X{\boldsymbol{\eta}} + \lambda\nabla^{\R^5}_X x =0$, so $a$ is constant.
Consequently $\langle x,a\rangle=\lambda\langle x,x\rangle=\lambda/c$ is constant on $u(D)$, hence
\[
u(D) \subset P^3 := \{y\in\S^4(c):\langle y,a\rangle=\lambda/c\},
\]
a projective hyperplane section of $\S^4(c)$, i.e. a round 3-sphere.
Moreover, $P^3\subset \S^4(c)$ is totally umbilic with shape operator $\lambda I$ with sectional curvature $c_1=c+\lambda^2\ge c$.

\smallskip
\noindent\emph{Case $c<0$ (hyperbolic).}
Write $c=-k^2$ ($k>0$) and use the hyperboloid model
$\mathbb{H}^4(c)=\{x\in\R^{4,1}:\langle x,x\rangle_1=1/c=-1/k^2\}$ in Minkowski space $\R^{4,1}$.
Exactly as in the spherical case, from $\overline{\nabla}_X{\boldsymbol{\eta}}=-\lambda X$ define
\[
a:={\boldsymbol{\eta}}+\lambda x\in\R^{4,1},
\]
which is constant, and obtain
\[
u(D) \subset P^3 := \{y\in\mathbb{H}^4(c):\langle y,a\rangle_1=\lambda/c\}.
\]
Again $P^3$ is totally umbilic with principal curvature $\lambda$.
Its sectional curvature is $c_1=c+\lambda^2$. The different choices  of $\lambda$
yields the standard classification:
\[
\lambda=0 \Rightarrow P^3 \text{ totally geodesic},\quad
0<|\lambda|<k \Rightarrow P^3 \text{ equidistant},
\]
\begin{equation*}
    |\lambda|=k \Rightarrow P^3 \text{ horosphere},\quad
|\lambda|>k \Rightarrow P^3 \text{ geodesic sphere}.
\end{equation*}
\noindent\textbf{Step 2}: $u(D)$ is totally umbilic in $P^3$.

Fix the $P^3$ found above. By Proposition~\ref{lem:PMC-and-angle-in-umbilical-S},  $u(D)$ is a constant mean curvature surface (CMC) in the $3$-dimensional space form $P^3$ with non-zero constant contact angle along $u(\partial D)\subset \partial B^4 \cap P^3$.

Let $A^P$ be the second fundamental form of $u(D)\subset P^3$ and define the Hopf type quartic differential
\[
\mathcal Q := \inner{A^P\left(\frac{\partial u}{\partial z},\frac{\partial u}{\partial z}\right), A^P\left(\frac{\partial u}{\partial z},\frac{\partial u}{\partial z}\right)}\,dz^4 = \phi(z) dz^4.
\]
For a CMC surface in a $3$-space form, Codazzi equation implies $\mathcal Q$ is holomorphic on $u(D)$ away from branch points, and it extends holomorphically across branch points.

\smallskip
We now show $ z^4 \mathcal Q$ is real along $\partial D$. Set $S_P:=\partial B^4\cap P^3$, which is totally umbilic in $P^3$ by Proposition \ref{lem:PMC-and-angle-in-umbilical-S}, i.e. $\nabla^{P}_{\boldsymbol{\tau}} \boldsymbol{n}_P = -\kappa\,\boldsymbol{\tau}$ for some constant $\kappa$. Here, $\boldsymbol{n}_P$ is the outward unit normal of $S_P\subset P^3$, which coincides with $\boldsymbol{\mu}$ as in Definition \ref{defi:contact angle}.

Differentiate the constant angle condition $\langle \boldsymbol{\nu},\boldsymbol{n}_P\rangle=\cos\theta_P$ along $\boldsymbol{\tau}$ to get
\[
0=\boldsymbol{\tau}\langle \boldsymbol{\nu},\boldsymbol{n}_P\rangle = \langle \nabla^{P}_{\boldsymbol{\tau}}\boldsymbol{\nu},\boldsymbol{n}_P\rangle + \langle \boldsymbol{\nu},\nabla^{P}_{\boldsymbol{\tau}}\boldsymbol{n}_P\rangle
= \langle \nabla^{P}_{\boldsymbol{\tau}}\boldsymbol{\nu},\boldsymbol{n}_P\rangle.
\]
Since $\boldsymbol{\tau}\perp \boldsymbol{\nu}$ and ${\boldsymbol{\tau}}\perp \boldsymbol{n}_P$, the vectors $\boldsymbol{\nu}$ and the unit normal $N_P$ of $du(TD)$ in $TP$ span the orthogonal complement of ${\boldsymbol{\tau}}$ in $T(P^3)$, and
\[
\boldsymbol{n}_P = (\cos\theta_P)\,\boldsymbol{\nu} + (\sin\theta_P)\,N_P.
\]
Therefore
\[
0=\langle \nabla^{P}_{\boldsymbol{\tau}}\boldsymbol{\nu},\boldsymbol{n}_P\rangle
=(\sin\theta_P)\,\langle \nabla^{P}_{\boldsymbol{\tau}}\boldsymbol{\nu},N_P\rangle
=(\sin\theta_P)\,A^P({\boldsymbol{\tau}},\boldsymbol{\nu}).
\]
Since $\sin\theta_P\neq 0$, we obtain $A^P({\boldsymbol{\tau}},\boldsymbol{\nu})=0$ along $u(\partial D)$.

For the convenience, we consider the polar coordinate $(\rho, \varphi)$ on $D$ and take the transformation $w = \log \rho + \sqrt{-1} \varphi$. Then, we have 
     \begin{equation*}
         A^P\p{\frac{\partial u}{\partial z}, \frac{\partial u}{\partial z}} = \frac{1}{z^2}A^P\p{\frac{\partial u}{\partial w}, \frac{\partial u}{\partial w}}
     \end{equation*}
     and on $\partial D$ there holds
     \begin{equation*}
         \frac{\partial}{\partial w} = \frac{1}{2}\p{\frac{\partial }{\partial \rho} + \sqrt{-1} \frac{\partial}{\partial \varphi}}. 
     \end{equation*}
     Therefore, on $u(\partial D)$,  the imaginary part of $z^2 A^P(\partial_z u, \partial_z u)$ can be rewritten as
     \begin{align} \label{eq1.5:proof theorem 1}
            \Im\p{ z^2 A^P\p{\frac{\partial u}{\partial w}, \frac{\partial u}{\partial w}}} &= - \abs{\nabla u}^2 A^P(\boldsymbol{\tau}, \boldsymbol{\nu})= 0,
    \end{align}
which in particular means $z^4 \mathcal{Q}(z)$ is real on $\partial D$. Since $z^4 \mathcal Q$ is holomorphic on $D$ and real on $\partial D$, Schwarz reflection tells us $z^4\mathcal Q\equiv c$ on $D$ for some $c \in \R_+$ by \eqref{eq1.5:proof theorem 1}. Since $z^4 \mathcal{Q}(z)$ vanishes at the origin, 
\begin{align*}
     z^4 \mathcal{Q}(z) &= \frac{1}{16}\abs{\rho^2 A^P\p{\frac{\partial u}{\partial \rho}, \frac{\partial u}{\partial\rho}}- A^P\p{\frac{\partial u}{\partial \varphi}, \frac{\partial u}{\partial\varphi}}}^2 - \frac{1}{4}\abs{\rho\, A^P\p{\frac{\partial u}{\partial \rho}, \frac{\partial u}{\partial \varphi}}}^2\nonumber\\
     &\quad + \frac{\sqrt{-1}}{4} \inner{\rho^2 A^P\p{\frac{\partial u}{\partial \rho}, \frac{\partial u}{\partial\rho}}- A^P\p{\frac{\partial u}{\partial \varphi}, \frac{\partial u}{\partial\varphi}}, \rho\, A^P\p{\frac{\partial u}{\partial \rho}, \frac{\partial u}{\partial \varphi}}}\nonumber\\
     &\equiv 0.
\end{align*}
This means, the holomorphic section $z^2 \mathcal{Q}(z)$ vanishes on $\partial D$, hence it vanishes on whole $D$.  Therefore, the trace-free part of the shape operator of $u(D)\subset P^3$ vanishes and $u(D)$ is totally umbilic in $P^3$. The remaining assertion of Theorem \ref{main theorem 1} follows from the classical classification of totally umbilic surfaces in three dimensional space forms, see for instance \cite[Chapter 7]{Spivak-vol4}.

Replacing the domain \( D \) by an annulus \( A(r,R) \) with \( 0<r<R \), an entirely analogous argument shows that the associated holomorphic quadratic differential satisfies
\[
z^{4}\mathcal{Q}(z)\equiv c
\quad \text{on } A(r,R),
\]
for some constant \( c\in\mathbb{R}_{+} \). If \( c=0 \), then the branched immersion \( u(A(r,R)) \) is totally umbilic and is therefore completely classified by Theorem~\ref{main theorem 1}. However, in all such cases, no totally umbilic surface admits a branched immersed annulus piece meeting the boundary with a non-zero constant contact angle. Consequently, the case \( c=0 \) is excluded. It follows that \( c>0 \), and hence \( u(A(r,R)) \) has no umbilic points. This completes the proof of Theorem \ref{main theorem 1}.
\end{proof}

\subsection{Proof of Theorem \ref{main theorem 2}}\label{sec:proof-main 4.2}\ 
\vspace{5pt}

In this subsection, we prove Theorem \ref{main theorem 2}.  The main input is the space-form codimension reduction Lemma~\ref{lem:flat-normal-implies-R4c} below.
More precisely, from the assumptions $\nabla^\perp H=0$ and $R^\perp=0$, we construct two normal vector fields
\[
{\boldsymbol{\xi}}_3:=\frac{H}{|H|}\qquad\text{and}\qquad {\boldsymbol{\xi}}_4,
\]
which are parallel in the normal bundle over $\Sigma_0:=\Sigma\setminus \mathcal{B},$ such that
\[
A\big(du(T\Sigma_0),\,du(T\Sigma_0)\big)\subset \mathrm{span}\{{\boldsymbol{\xi}}_3,{\boldsymbol{\xi}}_4\}.
\]
Equivalently, the rank-$4$ subbundle $du(T\Sigma_0)\oplus \mathrm{span}\{{\boldsymbol{\xi}}_3,{\boldsymbol{\xi}}_4\}$
is parallel with respect to $\overline{\nabla}$ along $u(\Sigma_0)$.
Integrating this parallel subbundle in the ambient space form $\R^n(c)$ and utilizing a global gluing argument yield a totally geodesic $P^4\cong \R^4 (c)$ containing $u(\Sigma)$.

\begin{proof}[\textbf{Proof of Theorem \ref{main theorem 2}}]
Let $\mathcal B\subset \Sigma$ be the set of branch points of $u$ and set $\Sigma_0:=\Sigma\setminus \mathcal B$. All  geometric identities below are proved on $\Sigma_0$ and then extend across $\mathcal B$ by smooth extension. The proof of Theorem \ref{main theorem 2} is divided into two cases:

\smallskip
\noindent\textbf{Case (I)}: $\Phi_H\equiv 0$ on $\Sigma_0$. 

In this case, $A_{\boldsymbol{\xi}} = |H| I$ on $du(T\Sigma_0)$. By a similar argument as Step 1 in proof of Theorem~\ref{main theorem 1}, there exists a totally umbilic hypersurface $M^{n-1}\subset \R^n(c)$
containing $u(\Sigma_0)$, whose unit normal along $u(\Sigma_0)$ equals $H/|H|$ and whose umbilicity factor equals $|H|$.

Now view $\Sigma_0\subset M^{n-1}\subset \R^n(c)$. For $X,Y\in du(T\Sigma_0)$,
\[
A^{\R^n(c)}_{\Sigma}(X,Y) = A^{M^{n-1}}_{\Sigma}(X,Y) + A^{\R^n(c)}_{M^{n-1}}(X,Y)
= A^{M^{n-1}}_{\Sigma}(X,Y) + \langle X,Y\rangle\,H.
\]
Taking traces gives the mean curvature vector of $\Sigma_0$ in $M^{n-1}$ vanishes on $\Sigma_0$. Hence $\Sigma_0$ is minimal in $M^{n-1}$, which yields the first alternative of Theorem \ref{main theorem 1} after taking closures to include branch points.

\smallskip
\noindent\textbf{Case (II)}: $\Phi_H\not\equiv 0$ on $\Sigma_0$.

Since $\Phi_H$ is holomorphic, the zero set of $\Phi_H$ is discrete, that is, the set
\[
\Omega:=\{p\in \Sigma_0:\ \Phi_H(p)\neq 0\}
\]
is open and dense.

Fix $p\in\Omega$. Choose an orthonormal local normal frame $\{{\boldsymbol{\xi}}_3,\dots,{\boldsymbol{\xi}}_n\}$ around $p$ so that
\[
{\boldsymbol{\xi}}_3=H/|H|.
\]
Since $\nabla^\perp H=0$, we have $\nabla^\perp {\boldsymbol{\xi}}_3=0$, in terms of normal connection 1-forms $\omega_{\alpha\beta}$ defined by
$\nabla^\perp {\boldsymbol{\xi}}_\alpha = \sum_{\beta}\omega_{\alpha\beta}\,{\boldsymbol{\xi}}_\beta$, this is equivalent to $\omega_{3\beta}\equiv 0$ for $\beta \ge 4$ on $\Sigma_0$. Applying the Ricci equation \eqref{eq:Ricci} with ${\boldsymbol{\xi}}={\boldsymbol{\xi}}_3$ and ${\boldsymbol{\eta}}={\boldsymbol{\xi}}_\alpha$ and using $\omega_{3\beta}=0$ gives
\begin{equation}\label{eq:commute-3r}
[A_{{\boldsymbol{\xi}}_3},A_{{\boldsymbol{\xi}}_\alpha}]=0,\qquad \alpha = 4,\dots,n.
\end{equation}
Also, since ${\boldsymbol{\xi}}_\alpha\perp H$, we have
\begin{equation}\label{eq:trace0}
\mathrm{tr}(A_{{\boldsymbol{\xi}}_\alpha})=2\langle H,{\boldsymbol{\xi}}_\alpha\rangle=0,\qquad \alpha = 4,\dots,n.
\end{equation}
At $p\in\Omega$, $\Phi_H(p)\neq 0$ means $A_{{\boldsymbol{\xi}}_3}$ is not a multiple of the identity at $p$.
For $2\times 2$ symmetric operators, \eqref{eq:commute-3r} implies that each $A_{{\boldsymbol{\xi}}_\alpha}(p)$ is diagonalizable
in the same orthonormal basis that diagonalizes $A_{{\boldsymbol{\xi}}_3}(p)$. And by \eqref{eq:trace0},  any two $A_{{\boldsymbol{\xi}}_\alpha}(p),A_{{\boldsymbol{\xi}}_\beta}(p)$ commute for $\alpha, \,\beta\geq 3$, so
\[
[A_{{\boldsymbol{\xi}}},A_{{\boldsymbol{\eta}}}](p)=0 \quad \text{for all normal vectors }{\boldsymbol{\xi}},{\boldsymbol{\eta}}\text{ at }p.
\]
Therefore $R^\perp(p)=0$ for every $p\in\Omega$, and by continuity $R^\perp\equiv 0$ on all of $\Sigma_0$.

\begin{lemma}
\label{lem:flat-normal-implies-R4c}
Let $u:\Sigma\to \R^n(c)$ be a branched immersion from a connected bordered Riemann surface $\Sigma$, and set
$\Sigma_0:=\Sigma\setminus\mathcal B$ where $\mathcal B$ is the branch set.
Assume on $\Sigma_0$ that
\[
\nabla^\perp H\equiv 0,\qquad H\not\equiv 0,\qquad R^\perp\equiv 0
\]
Then there exists a totally geodesic submanifold
$P^4\cong \R^4(c)\subset \R^n(c)$ such that $u(\Sigma_0)\subset P^4$.
\end{lemma}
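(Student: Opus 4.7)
The overall plan is to produce, from the two parallelism hypotheses $\nabla^\perp H=0$ and $R^\perp \equiv 0$, a parallel rank-at-most-two subbundle $L$ of the normal bundle $du(T\Sigma_0)^\perp$ that contains both $H$ and the image of the second fundamental form $A$. Granted this, $E:=du(T\Sigma_0)\oplus L$ will be a parallel subbundle of $u^*T\R^n(c)$ along $u(\Sigma_0)$ of rank at most four, and the standard exponential construction in a space form will integrate $E$ to a totally geodesic $P^4\cong \R^4(c)$ with $u(\Sigma_0)\subset P^4$.

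To construct $L$, I would first combine $R^\perp\equiv 0$ with the Ricci equation \eqref{eq:Ricci} to obtain $[A_\xi,A_\eta]=0$, and hence $[A^\circ_\xi,A^\circ_\eta]=0$, for all normal sections $\xi,\eta$, where $A^\circ_\xi$ denotes the trace-free part. Since commuting trace-free symmetric operators on a two-dimensional space are linearly dependent, the map $\xi\mapsto A^\circ_\xi$ has rank at most one pointwise, so in a common eigenbasis $\{e_1,e_2\}$ of $\{A^\circ_\xi\}$ the image of $A^\circ$ is the real line spanned by $\eta:=A^\circ(e_1,e_1)$ and consequently $\mathrm{image}(A)\subset \mathrm{span}\{H,\eta\}$ at every point. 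To globalize, I would exploit that $R^\perp\equiv 0$ together with $\nabla^\perp H=0$ furnishes local parallel orthonormal normal frames $\{\xi_3=H/|H|,\xi_4,\ldots,\xi_n\}$, in which Lemma \ref{lem:holomorphic} makes each coefficient $\Phi_\alpha:=\langle A(\partial_z,\partial_z),\xi_\alpha\rangle$ holomorphic. In Case (II), $\Phi_3$ is not identically zero, so on the open dense set $\Omega=\{\Phi_H\neq 0\}$ the pointwise rank-one property yields quotients $c_\alpha=\Phi_\alpha/\Phi_3$ that must be real; since a meromorphic function taking real values on an open set is a real constant, the $c_\alpha$ are \emph{global} real constants. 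The parallel unit normal $\tilde\xi$ proportional to $\xi_3+\sum_{\alpha\geq 4}c_\alpha\xi_\alpha$ then realizes $L=\mathrm{span}\{H,\tilde\xi\}$ on each coordinate patch, and the intrinsic characterization of $L$ as the smallest parallel normal subbundle containing $H$ and $\mathrm{image}(A)$ ensures that the local descriptions glue to a single global parallel rank-$\le 2$ subbundle.

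Once $L$ is in hand, the parallelism of $E=du(T\Sigma_0)\oplus L$ in $\overline\nabla$ along $u(\Sigma_0)$ is immediate from Gauss and Weingarten: $\overline\nabla_V X=\nabla_V X+A(V,X)\in E$ for tangent $X$, and $\overline\nabla_V \xi=-A_\xi V+\nabla^\perp_V\xi\in E$ for $\xi\in L$ by parallelism of $L$. Fixing a base point $p_0\in\Sigma_0$ and setting $P:=\exp^{\R^n(c)}_{u(p_0)}(E_{p_0})$ then produces a totally geodesic submanifold of $\R^n(c)$ of dimension at most four and constant curvature $c$; the standard fact that a parallel distribution tangent to a submanifold of a space form integrates to a totally geodesic leaf will give $u(\Sigma_0)\subset P$, and when $L$ has rank less than two one enlarges it by any parallel direction to reach rank two so that $P^4\cong \R^4(c)$ exactly. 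The main obstacle I anticipate is the globalization of $L$: while the holomorphicity of the $\Phi_\alpha$ and the intrinsic description of $L$ should rule out both the isolated failures at zeros of $\Phi_H$ and the potential monodromy around topologically nontrivial loops on a bordered surface of higher genus, the rank-drop subcase where all $c_\alpha$ vanish must be separated off and treated via the same codimension-reduction mechanism already used in Case (I).
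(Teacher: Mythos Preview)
Your approach is correct and reaches the same conclusion as the paper, but by a genuinely different route in two respects. For the local step, the paper selects $\xi_4$ spanning $N'_p=(\ker\gamma_p)^\perp\cap H^\perp$ on the set $M_1=\{\dim N'_p=1\}$ and applies the Codazzi equation in operator form directly (their equation \eqref{eq:Codazzi-operator}) to force $\omega_{\alpha 4}=0$ for $\alpha\ge 5$, hence $\nabla^\perp\xi_4=0$. You instead lift the Claim~\ref{claim 1} argument to arbitrary codimension: in a local parallel frame (available since $R^\perp=0$), each $\Phi_\alpha$ is holomorphic by Lemma~\ref{lem:holomorphic}, commutation $[A^\circ_{\xi_\alpha},A^\circ_{\xi_3}]=0$ makes $\Phi_\alpha/\Phi_3$ real on $\Omega=\{\Phi_H\neq0\}$, and the open mapping theorem forces these ratios to be constant, yielding your parallel $\tilde\xi$. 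Both encode the same Codazzi content; yours stays inside the complex-analytic machinery of Section~\ref{sec:preliminaries 2}, while the paper's is a direct Riemannian computation. For the global step, the paper integrates first in the linear model $\mathbb{E}^{n+1}_\varepsilon$ to obtain a constant $5$-plane $W_0$ on each patch, and then runs an explicit open--closed propagation argument (the set $\mathcal{S}$) with a case analysis across the strata $M_1$ and $\operatorname{int}(M_0)$, invoking holomorphic continuation of the $\Phi_{e_\alpha}$ only at that gluing stage. You reverse the order: glue $L$ first via its intrinsic description $L_p=\mathrm{span}\{H_p\}+\mathrm{image}(A^\circ_p)$ on the connected dense set $\Omega$, and integrate afterwards. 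Your ordering is cleaner and sidesteps the stratification, at the cost of needing $\Omega$ dense.

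Two minor caveats. The lemma as stated does not assume $\Phi_H\not\equiv 0$; your argument is written under that hypothesis (your ``Case (II)''), which is exactly how the lemma is invoked in the proof of Theorem~\ref{main theorem 2}, but the paper's Codazzi route proves the lemma without it. And in the rank-one subcase your phrase ``enlarge $L$ by any parallel direction'' can fail globally if the holonomy of the flat connection on $H^\perp$ is nontrivial on a higher-genus $\Sigma_0$; the safe fix is to integrate the rank-one $L$ to a totally geodesic $P^3$ and then pass to any $P^4\supset P^3$, rather than enlarging $L$ itself.
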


\begin{proof}

Fix $p\in\Sigma_0$.
Let $ d u(T_p\Sigma_0)^\perp$ be the normal space and set
\[
{\boldsymbol{\xi}}_3(p):=\frac{H(p)}{|H(p)|},\qquad
N_p^0:=\{{\boldsymbol{\eta}}\in d u(T_p\Sigma_0)^\perp:\langle {\boldsymbol{\eta}},{\boldsymbol{\xi}}_3(p)\rangle=0\}.
\]
For ${\boldsymbol{\eta}}\in N_p^0$ we have $\mathrm{trace}(A_{\boldsymbol{\eta}})=2\langle H,{\boldsymbol{\eta}}\rangle=0$, so $A_{\boldsymbol{\eta}}\in\mathrm{Sym}_0(T_p\Sigma_0)$
(traceless symmetric endomorphisms).
Define the linear map
\[
\gamma_p:N_p^0\longrightarrow \mathrm{Sym}_0(T_p\Sigma_0),\qquad \gamma_p({\boldsymbol{\eta}}):=A_{\boldsymbol{\eta}}.
\]
By Ricci equation \eqref{eq:Ricci}, the assumption $R^\perp\equiv 0$ implies, for ${\boldsymbol{\eta}}_1,{\boldsymbol{\eta}}_2\in N_p^0$, the two traceless symmetric $2\times 2$ matrices
$A_{{\boldsymbol{\eta}}_1},A_{{\boldsymbol{\eta}}_2}$ commute, hence they are simultaneously diagonalizable. Since they are traceless,
each is a multiple of $\mathrm{diag}(1,-1)$ in that basis, so they are linearly dependent.
Thus
\[
\dim \mathrm{Im}(\gamma_p)\le 1.
\]
Let $\mathcal O_p:=\ker(\gamma_p)=\{{\boldsymbol{\eta}}\in N_p^0:A_{\boldsymbol{\eta}}=0\}$ and define
\[
N'_p:=\mathcal O_p^\perp\cap N_p^0.
\]
Then $\dim N'_p=\dim\mathrm{Im}(\gamma_p)\le 1$.

Define
\[
M_1:=\{p\in\Sigma_0:\dim N'_p=1\},\qquad M_0:=\{p\in\Sigma_0:\dim N'_p=0\}.
\]
Then $M_1$ is open. Let $U\subset M_1$ be a  connected open local chart. Since $\dim N'_p=1$ on $U$, we can choose a smooth unit normal field
${\boldsymbol{\xi}}_4$ on $U$ spanning $N'_p$.
Extend $\{{\boldsymbol{\xi}}_3,{\boldsymbol{\xi}}_4\}$ to a local orthonormal normal frame
$\{{\boldsymbol{\xi}}_3,{\boldsymbol{\xi}}_4,{\boldsymbol{\xi}}_5,\dots,{\boldsymbol{\xi}}_n\}$ on $U$ such that
\[
{\boldsymbol{\xi}}_\alpha(p)\in \mathcal O_p, \quad\text{for all }p\in U,\ \alpha \ge 5.
\]
Equivalently,
\begin{equation}\label{eq:Axi_r_zero}
A_{{\boldsymbol{\xi}}_\alpha}\equiv 0 \quad\text{on }U,\qquad \alpha=5,\dots,n.
\end{equation}

Because $\nabla^\perp H=0$ and $|H|\neq 0$ is constant on $U$, we have
\[
\nabla^\perp {\boldsymbol{\xi}}_3=0.
\]
This implies
\begin{equation}\label{eq:omega3r_zero}
\omega_{3\beta}\equiv 0\quad\text{on }U,\qquad \beta = 4,\dots,n.
\end{equation}

Next we prove ${\boldsymbol{\xi}}_4$ is also parallel in the normal bundle on $U$.
Use the operator form of Codazzi equation \eqref{eq:Codazzi} in the moving normal frame $\{{\boldsymbol{\xi}}_3,{\boldsymbol{\xi}}_4,{\boldsymbol{\xi}}_5,\dots,{\boldsymbol{\xi}}_n\}$,
for each $\alpha$ and all tangent vector fields $X,Y$ to $u(\Sigma_0)$, we have
\begin{equation}\label{eq:Codazzi-operator}
(\nabla_X A_{{\boldsymbol{\xi}}_\alpha})(Y)-(\nabla_Y A_{{\boldsymbol{\xi}}_\alpha})(X)
=\sum_{\beta=3}^n\Big(\omega_{\alpha\beta}(X)A_{{\boldsymbol{\xi}}_\beta}(Y)-\omega_{\alpha\beta}(Y)A_{{\boldsymbol{\xi}}_\beta}(X)\Big).
\end{equation}
Apply \eqref{eq:Codazzi-operator} with $\alpha \ge 5$.
Since $A_{{\boldsymbol{\xi}}_\alpha}\equiv 0$ by \eqref{eq:Axi_r_zero}, the left-hand side of \eqref{eq:Codazzi-operator} vanishes.
On the right-hand side, the only potentially non-zero terms are $\beta=3,4$,
but $\omega_{\alpha3}=-\omega_{3\alpha}=0$ by \eqref{eq:omega3r_zero}, so we obtain
\[
0=\omega_{\alpha 4}(X)A_{{\boldsymbol{\xi}}_4}(Y)-\omega_{\alpha 4}(Y)A_{{\boldsymbol{\xi}}_4}(X)\qquad\text{on }U,\quad \alpha \ge 5.
\]
Fix $p\in U$. Since $p\in M_1$, we have $A_{{\boldsymbol{\xi}}_4}(p)\neq 0$,  it has eigenvalues $\lambda,-\lambda$ with $\lambda\neq 0$.
Choose an orthonormal basis $(e_1,e_2)$ of $T_p\Sigma_0$ diagonalizing $A_{{\boldsymbol{\xi}}_4}(p)$ so that
$A_{{\boldsymbol{\xi}}_4}(p)(e_1)=\lambda e_1$, $A_{{\boldsymbol{\xi}}_4}(p)(e_2)=-\lambda e_2$.
Plugging $(X,Y)=(e_1,e_2)$ yields
\[
0=\omega_{\alpha 4}(e_1)(-\lambda e_2)-\omega_{\alpha 4}(e_2)(\lambda e_1),
\]
so $\omega_{\alpha 4}(e_1)=\omega_{\alpha 4}(e_2)=0$, hence $\omega_{\alpha 4}(p)=0$.
Since $p$ was arbitrary, we have
\begin{equation}\label{eq:omega_r4_zero}
\omega_{\alpha 4}\equiv 0\quad\text{on }\,\,U,\qquad \alpha=5,\dots,n.
\end{equation}
Using $\omega_{43}=-\omega_{34}=0$ from \eqref{eq:omega3r_zero},
we conclude
\begin{equation}\label{eq:xi4_parallel}
\nabla^\perp {\boldsymbol{\xi}}_4=\sum_{\beta}\omega_{4\beta}{\boldsymbol{\xi}}_\beta\equiv 0\quad\text{on }U.
\end{equation}

From \eqref{eq:Axi_r_zero}, for $\alpha \ge 5$ and all tangent $X,Y$ to $u(\Sigma_0)$,
\[
0=\langle A_{{\boldsymbol{\xi}}_\alpha}(X),Y\rangle=\langle A(X,Y),{\boldsymbol{\xi}}_\alpha\rangle.
\]
Hence $A(X,Y)$ has no component in ${\boldsymbol{\xi}}_\alpha$ for $\alpha \ge 5$, i.e.
\begin{equation}\label{eq:B_in_span_34}
A(du(TU),du(TU))\subset \mathrm{span}\{{\boldsymbol{\xi}}_3,{\boldsymbol{\xi}}_4\}\quad\text{on }U.
\end{equation}
We now use the standard linear model of space form as in the proof of Theorem \ref{main theorem 1}.
Embed $\R^n(c)$ isometrically as a quadric in a flat $(n+1)$--dimensional vector space
\[
\R^n \hookrightarrow \mathbb{E}^{n+1}_\varepsilon,
\]
where $\mathbb{E}^{n+1}_\varepsilon=\R^{n+1}$ if $c\ge 0$ and $\mathbb{E}^{n+1}_\varepsilon=\R^{n,1}$ if $c<0$,
so that the position vector $x$ of $\R^n(c)$ satisfies $\langle x,x\rangle_\varepsilon=1/c$ when $c\neq 0$.
Let $D$ be the flat connection on $\mathbb{E}^{n+1}_\varepsilon$.
Then for tangent vector fields $X,Y$ on $\R^n(c)$,
the Gauss formula for the quadric gives
\begin{equation}\label{eq:quadric-Gauss}
D_X Y = \overline{\nabla}_X Y + c\,\langle X,Y\rangle\,x.
\end{equation}
Also, $D_X x = X$ for $X$ tangent to $\R^n(c)$. Fix $p_0\in U$ and define the linear subspace
\[
W_0:=\mathrm{span}\big\{u(p_0),\ du(T_{p_0}\Sigma_0),\ {\boldsymbol{\xi}}_3(p_0),\ {\boldsymbol{\xi}}_4(p_0)\big\}\subset \mathbb{E}^{n+1}_\varepsilon.
\]
We claim that for every $p\in U$,
\begin{equation}\label{eq:W_const}
\mathrm{span}\big\{u(p),\ du(T_p\Sigma_0),\ {\boldsymbol{\xi}}_3(p),\ {\boldsymbol{\xi}}_4(p)\big\}=W_0.
\end{equation}

To prove \eqref{eq:W_const}, let $X,Y$ be tangent vector fields on $u(U)$.
Using \eqref{eq:quadric-Gauss}, we get
\begin{align*}
D_XY
&= \overline{\nabla}_XY + c\,\langle X,Y\rangle\,u \\
&= \nabla_XY + A(X,Y) + c\,\langle X,Y\rangle\,u.
\end{align*}
By \eqref{eq:B_in_span_34}, $A(X,Y)\in \mathrm{span}\{{\boldsymbol{\xi}}_3,{\boldsymbol{\xi}}_4\}$, so $D_XY$ lies in the span of
$\{u,du(T\Sigma_0),{\boldsymbol{\xi}}_3,{\boldsymbol{\xi}}_4\}$. Next, for $\alpha=3,4$,
since $\langle X,{\boldsymbol{\xi}}_\alpha\rangle=0$, we have from \eqref{eq:quadric-Gauss}
$D_X{\boldsymbol{\xi}}_\alpha=\overline{\nabla}_X{\boldsymbol{\xi}}_\alpha$.
Using the Weingarten formula on $N$,
\[
\overline{\nabla}_X{\boldsymbol{\xi}}_\alpha=-A_{{\boldsymbol{\xi}}_\alpha}(X)+\nabla^\perp_X{\boldsymbol{\xi}}_\alpha.
\]
By $\nabla^\perp{\boldsymbol{\xi}}_3=0$ and \eqref{eq:xi4_parallel}, this gives
\[
D_X{\boldsymbol{\xi}}_\alpha=-A_{{\boldsymbol{\xi}}_\alpha}X\in du(T\Sigma_0).
\]
Finally, $D_X u = X\in du(T\Sigma_0)$. Therefore, the flat derivative $D_X$ preserves the subbundle
\[
p\longmapsto \mathrm{span}\{u(p),du(T_p\Sigma_0),{\boldsymbol{\xi}}_3(p),{\boldsymbol{\xi}}_4(p)\}\subset \mathbb{E}^{n+1}_\varepsilon
\]
along $U$. Since $D$ is flat, this implies the subspace is constant along any curve in $U$,
hence \eqref{eq:W_const} holds.

Define
\[
P^4 := \R^n(c)\cap W_0 \subset \R^n(c).
\]
Because $W_0$ is a linear subspace of $\mathbb{E}^{n+1}_\varepsilon$, the submanifold $P^4$
is totally geodesic, hence is a 4-dimensional space form of curvature $c$.
Moreover, by \eqref{eq:W_const}, $u(U)\subset W_0$, so $u(U)\subset P^4$.
This proves the claim on $U\subset M_1$.

On $M_0$, we have $\dim N'_p=0$, hence $A_{\boldsymbol{\eta}}=0$ for all ${\boldsymbol{\eta}}\perp H$.
Thus $A(X,Y)\subset \mathrm{span}\{{\boldsymbol{\xi}}_3\}$ and the same argument yields the containment in a totally geodesic
$P^3\cong \R^3(c)$, which is a totally geodesic submanifold of $P^4$ in the previous case.

Assume first $M_1\neq\varnothing$ and fix one connected component $U_*\subset M_1$.
Pick $p_*\in U_*$ and let ${\boldsymbol{\xi}}_4^{(*)}$ be the unit normal field on $U_*$ spanning $N'_p$. Define
\[
W_*:=\mathrm{span}\big\{u(p_*),\ du(T_{p_*}\Sigma_0),\ {\boldsymbol{\xi}}_3(p_*),\ {\boldsymbol{\xi}}_4^{(*)}(p_*)\big\}\subset \mathbb{E}^{n+1}_\varepsilon,
\qquad
P^4_*:=\R^n(c)\cap W_*.
\]
By previous argument on $U\subset M_1$, we have $u(U_*)\subset P^4_*$. Define the propagation set
\[
\mathcal S:=\Big\{q\in \Sigma_0:\ \exists\ \text{a neighborhood } U_q\ni q \text{ such that } u(U_q)\subset P^4_*\Big\}.
\]
Then $\mathcal S$ is open and nonempty.
We claim that $\mathcal S$ is also closed in $\Sigma_0$.
Let $p\in \overline{\mathcal S}$ and choose a small connected neighborhood $W\ni p$.

\medskip
\noindent\textbf{Case II(a).} $p\in \partial U_1\cap \partial V$, where $U_1$ is a connected component of $M_1$ with $U_1\subset \mathcal S$
and $V$ is a connected component of $\mathrm{int}(M_0)$.

On $V\subset M_0$, we have $\dim N'_q=0$, hence $A_{\boldsymbol{\eta}}=0$ for all ${\boldsymbol{\eta}}\perp{\boldsymbol{\xi}}_3$.
Therefore, by the same quadric model argument of space form used above,
there exists a totally geodesic $P^3_V:=\R^n(c)\cap W_V$ such that $u(V)\subset P^3_V$ for some 4-dimensional linear space $W_V \subset \mathbb{E}^{n+1}_\varepsilon$.
Since $U_1\subset\mathcal S$, we also have $u(U_1\cap W)\subset P^4_*$, and $u(W\cap V)\subset P^3_V \subset P^4_*$. Thus $u(W)\subset P^4_*$, so $p\in \mathcal S$.

\medskip
\noindent\textbf{Case II(b).} $p\in \partial U_1\cap \partial U_2$, where $U_1,U_2$ are possibly distinct connected components of $M_1$, and $U_1\subset \mathcal S$.
Since $R^\perp\equiv 0$, there exists (after possibly shrinking $W$) a local orthonormal normal frame
$\{{\boldsymbol{e}}_3,{\boldsymbol{e}}_4,\dots,{\boldsymbol{e}}_n\}$ on $W$ with each vector being parallel in the normal bundle.
Because $\nabla^\perp{\boldsymbol{\xi}}_3\equiv 0$ and $|H|\neq 0$, we may arrange, by a constant rotation of the parallel frame, that
\[
{\boldsymbol{e}}_3={\boldsymbol{\xi}}_3\quad\text{on }W.
\]
On $W\cap U_1$, the frame $\{{\boldsymbol{\xi}}_3,{\boldsymbol{\xi}}_4^{(1)},{\boldsymbol{\xi}}_5^{(1)},\dots,{\boldsymbol{\xi}}_n^{(1)}\}$
constructed above is also parallel in the normal bundle, where ${\boldsymbol{\xi}}_5^{(1)},\dots,{\boldsymbol{\xi}}_n^{(1)}$  are actually the constant vectors, and satisfies
\[
A_{{\boldsymbol{\xi}}_\alpha^{(1)}}\equiv 0\quad(\alpha\ge 5).
\]
Hence, on the connected set $W\cap U_1$, the change of frame coefficients between $\{{\boldsymbol{e}}_\alpha\}$ and $\{{\boldsymbol{\xi}}_\alpha^{(1)}\}$
are constant. Up to replacing ${\boldsymbol{e}}_4$ by $-{\boldsymbol{e}}_4$, we may assume
\[
{\boldsymbol{e}}_4={\boldsymbol{\xi}}_4^{(1)}\quad\text{on }W\cap U_1.
\]
In particular, for $\alpha\ge 5$ we have $A_{{\boldsymbol{e}}_\alpha}\equiv 0$ on $W\cap U_1$, because each ${\boldsymbol{e}}_\alpha$ is a constant linear combination
of ${\boldsymbol{\xi}}_5^{(1)},\dots,{\boldsymbol{\xi}}_n^{(1)}$ and $A_{{\boldsymbol{\xi}}_\alpha^{(1)}}\equiv 0$ for $\alpha\ge 5$.

Since ${\boldsymbol{e}}_\alpha\perp{\boldsymbol{e}}_3={\boldsymbol{\xi}}_3$, we have $\mathrm{trace}(A_{{\boldsymbol{e}}_\alpha})=2\langle H,{\boldsymbol{e}}_\alpha\rangle=0$ for $\alpha \geq 
5$. And $\nabla^\perp {\boldsymbol{e}}_\alpha\equiv 0$ on $W$, the associated Hopf-type quadratic differential
\[
\Phi_{{\boldsymbol{e}}_\alpha}:=\inner{ A\left(\frac{\partial u}{\partial z}, \frac{\partial u}{\partial z}\right),{\boldsymbol{e}}_\alpha}\,dz^2
\]
is holomorphic on $W$ by Lemma \ref{lem:holomorphic}. Since $\Phi_{{\boldsymbol{e}}_\alpha}\equiv 0$ on the nonempty open set $W\cap U_1$,
unique continuation implies $\Phi_{{\boldsymbol{e}}_\alpha}\equiv 0$ on $W$, and
\[
A_{{\boldsymbol{e}}_\alpha}\equiv 0\quad\text{on }W,\qquad \alpha =5,\dots,n.
\]
Now restrict to $W\cap U_2$. There, we also have a parallel normal frame
$\{{\boldsymbol{\xi}}_3,{\boldsymbol{\xi}}_4^{(2)},{\boldsymbol{\xi}}_5^{(2)},\dots,{\boldsymbol{\xi}}_n^{(2)}\}$ with
\[
A_{{\boldsymbol{\xi}}_\alpha^{(2)}}\equiv 0\quad(\alpha\ge 5).
\]
Write on $W\cap U_2$
\[
{\boldsymbol{e}}_\alpha=\sum_{\beta =3}^n \tilde c_{\alpha \beta}\,{\boldsymbol{\xi}}_\beta^{(2)},\qquad \alpha=3,\dots,n,
\]
with constant coefficients $\tilde c_{\alpha\beta}$ as a transformation matrix of parallel orthonormal frames. For $\alpha \ge 5$, we have $\tilde{c}_{\alpha 3} = 0$ and 
\[
0=A_{{\boldsymbol{e}}_\alpha}
=\tilde c_{\alpha4}\,A_{{\boldsymbol{\xi}}_4^{(2)}}+\sum_{\beta\ge 5}\tilde c_{\alpha\beta}\,A_{{\boldsymbol{\xi}}_\beta^{(2)}}
=\tilde c_{\alpha 4}\,A_{{\boldsymbol{\xi}}_4^{(2)}}\quad\text{on }W\cap U_2.
\]
Because $U_2\subset M_1$, we have $A_{{\boldsymbol{\xi}}_4^{(2)}}\not\equiv 0$ on $U_2$, hence $\tilde c_{\alpha4}=0$ for all $\alpha\ge 5$. Therefore ${\boldsymbol{e}}_4\in \mathrm{span}\{{\boldsymbol{\xi}}_3,{\boldsymbol{\xi}}_4^{(2)}\}$ on $W\cap U_2$, and since
${\boldsymbol{e}}_4\perp{\boldsymbol{e}}_3={\boldsymbol{\xi}}_3$ we get
\[
{\boldsymbol{e}}_4=\pm{\boldsymbol{\xi}}_4^{(2)}\quad\text{on }W\cap U_2.
\]
Consequently, on $W\cap U_2$,
\[
\mathrm{span}\big\{u,\ du(T\Sigma_0),\ {\boldsymbol{\xi}}_3,\ {\boldsymbol{\xi}}_4^{(2)}\big\}
=
\mathrm{span}\big\{u,\ du(T\Sigma_0),\ {\boldsymbol{e}}_3,\ {\boldsymbol{e}}_4\big\}.
\]
Since $U_1\subset\mathcal S$ and ${\boldsymbol{e}}_4={\boldsymbol{\xi}}_4^{(1)}$ on $W\cap U_1$, we have
$\mathrm{span}\{u,du(T\Sigma_0),{\boldsymbol{e}}_3,{\boldsymbol{e}}_4\}\subset W_*$
on $W\cap U_1$, hence, by connectedness of $W$ and flatness of $D$ as in the proof of \eqref{eq:W_const},
the same inclusion holds on all of $W$:
\[
\mathrm{span}\big\{u(p),\ du(T_p\Sigma_0),\ {\boldsymbol{e}}_3(p),\ {\boldsymbol{e}}_4(p)\big\}\subset W_*\qquad\forall\,p\in W.
\]
In particular, $u(W)\subset \R^n(c)\cap W_*=P^4_*$, so $p\in \mathcal S$.

\medskip
In both cases we conclude $p\in\mathcal S$, hence $\mathcal S$ is closed.
Therefore, since $\Sigma_0$ is connected, $\mathcal S=\Sigma_0$, and we obtain
\[
u(\Sigma_0)\subset P^4_*,
\]
where $P^4_*\cong \R^4(c)$ is totally geodesic in $\R^n(c)$.

If $M_1=\varnothing$, then $\Sigma_0=M_0$ and the above argument adapted on $M_0$ gives $u(\Sigma_0)$ contained in a totally geodesic $P^3\cong \R^3(c)$, hence also in some totally geodesic $P^4$. This completes the proof of Lemma \ref{lem:flat-normal-implies-R4c}.
\end{proof}

Now $u:\Sigma_0\to P^4 = \R^4(c)$ has non-zero parallel mean curvature, constant non-zero contact angle and codimension 2, by Proposition~\ref{lem:PMC-and-angle-in-umbilical-S}. Then, by the same argument as Claim~\ref{claim 1} and Step 1 in the proof of Theorem \ref{main theorem 1} from the previous subsection, it follows that $u(\Sigma_0)$ is contained in a totally umbilic hypersurface $M_2^3\subset P^4$.
Inside the 3-manifold $M^3_2$, by Proposition \ref{lem:PMC-and-angle-in-umbilical-S}, the surface $u(\Sigma)$ has constant mean curvature and a constant non-zero contact angle along each boundary component of $u(\partial \Sigma) \subset \partial B^n \cap P^4$. This proves the second alternative in Theorem \ref{main theorem 2}.  Therefore, we finish the proof of Theorem~\ref{main theorem 2}. 
\end{proof}

%%%%%%%%%%%%%%%%%%%%%%%%%%%%%%%%%%%%%%%%%%%%%%%%%%%%%%
\vskip2cm
\bibliographystyle{amsalpha}
\bibliography{references}
\providecommand{\MR}{\relax\ifhmode\unskip\space\fi MR }
\providecommand{\MRhref}[2]{
    \href{http://www.ams.org/mathscinet-getitem?mr=#1}{#2}
}
\end{document}